\def\multiset#1#2{\ensuremath{\left(\kern-.3em\left(\genfrac{}{}{0pt}{}{#1}{#2}\right)\kern-.3em\right)}}
\def\oversortoftilde#1{\mathop{\vbox{\m@th\ialign{##\crcr\noalign{\kern3\p@}%
      \sortoftildefill\crcr\noalign{\kern3\p@\nointerlineskip}%
      $\hfil\displaystyle{#1}\hfil$\crcr}}}\limits}
\def\sortoftildefill{$\m@th \setbox\z@\hbox{$\braceld$}%
  \braceld\leaders\vrule \@height\ht\z@ \@depth\z@\hfill\braceru$}
 \newcommand{\m}{\mathfrak{m} }
  \newcommand{\Ass}{\operatorname{Ass}}
  \newcommand{\reg}{\operatorname{reg}}
 \newcommand{\depth}{\operatorname{depth}}
\newcommand{\proset}{\,\mathrel{\lower 4pt\hbox{$\scriptscriptstyle/$}
\mkern -14mu\subseteq }\,} 
 \newtheorem{theorem}{Theorem}[section]
 \newtheorem{corollary}[theorem]{Corollary}
 \newtheorem{lemma}[theorem]{Lemma}
 \newtheorem{proposition}[theorem]{Proposition}
 \theoremstyle{definition}
 \newtheorem{remark}[theorem]{Remark}
 \newtheorem{definition}[theorem]{Definition}
 \newtheorem{example}[theorem]{Example}
\newcommand{\Mod}[1]{\ (\mathrm{mod}\ #1)}
\newenvironment{mythm}[1]
  {\innercustomthm}
  {\endinnercustomthm}
\title{A study of $v$-number for some monomial ideals}
\author{Prativa Biswas and Mousumi Mandal }
\thanks{AMS Classification 2010: 05E40, 13F20, 13F55,05C38, 05C69}
\thanks{Key words and phrases: edge ideals, monomial ideals, $v$-number, Castelnuovo-Mumford regularity}
\address{Department of Mathematics, Indian Institute of Technology Kharagpur, 721302, India}\email{prativabiswassnts@kgpian.iitkgp.ac.in}
\address{Department of Mathematics, Indian Institute of Technology Kharagpur, 721302, India}\email{mousumi@maths.iitkgp.ac.in}
\begin{document}

\maketitle
\begin{abstract}
In this paper, we give formulas for $v$-number of edge ideals of some graphs like path, cycle, 1-clique sum of a path and a cycle, 1-clique sum of two cycles and join of two graphs.
    For an $\mathfrak{m}$-primary monomial ideal $I\subset S=K[x_1,\ldots,x_t]$, we provide an explicit expression of $v$-number of $I$, denoted by $v(I)$, and give an upper bound of $v(I)$ in terms of the degree of its generators. We show that for a monomial ideal $I$, $v(I^{n+1})$ is bounded above by a linear polynomial for large $n$ and for certain classes of monomial ideals, the upper bound is achieved  for all $n\geq 1$.  For $\mathfrak m$-primary monomial ideal $I$ we prove  that $v(I)\leq \reg(S/I)$ and their difference can be arbitrarily large. 
\end{abstract}
    
\section{Introduction}
Let $S=K[x_1,x_2,\ldots,x_t]=\displaystyle\bigoplus_{k\geq 0}S_k$ be a polynomial ring in $t$ variables over a field $K$ with standard gradation. Let $I$ be a proper graded ideal of $S$. A prime ideal $\mathcal{P}$ of $S$ is an associated prime of $S/I$ if $(I:f)=\mathcal P$ for some $f\in S_k$. The set of associated primes of $S/I$ is denoted by $\Ass(S/I)$ or $\Ass(I)$. The minimal elements of $\Ass(I)$ with respect to inclusion are called minimal primes of $I$ and the rest of the elements are called embedded primes of $I$. A new invariant was introduced by Cooper \textit{et al.} in \cite{Cooper} to study the asymptotic behaviour of the minimum distance of projective Reed Muller type codes, known as $v$-number of $I$, denoted by $v(I)$, which is defined as follows:
\begin{center}\normalfont
        $v(I):=\min\{k\geq 0 ~|~ \exists f\in S_k$ and  $\mathcal{P}\in \Ass(I) ~\mbox{with}~ (I:f)=\mathcal{P}\}$.
    \end{center}
    If $(I:f)=\mathcal{P}\in \Ass(I)$ be such that $\deg f= v(I)$, then $f$ is said to be a corresponding monomial  of $v(I)$.
  For each $\mathcal P\in \Ass(I)$ the $v$-number of $I$  at $\mathcal{P}$, denoted by $v_\mathcal{P}(I)$, is defined as
  \begin{center}
      $v_\mathcal{P}(I):=\min\{k\geq 0 ~|~ \exists f\in S_k ~\mbox{with} ~(I:f)=\mathcal{P}\}$.
  \end{center}
It is clear from the definition that $v(I)\leq v_{\mathcal P}(I)$. In \cite{jaramillo} Jaramillo \textit{et al.} have studied $v(I)$ for square free monomial ideal by connecting it with the edge ideal of a clutter and have given a combinatorial formula for $v(I)$. Using this combinatorial characterization, in this paper, we give explicit formula for the $v$-number of edge ideal of a path and cycle in terms of its length.  In \cite{Saha} Saha and Sengupta have proved that if $I_1$ and $I_2$ are variable disjoint monomial ideals of $S$ then $v$-number is additive i.e. $v(I_1+I_2)=v(I_1)+v(I_2)$. We observe that additivity of $v$-number doesn't hold if $I_1$ and $I_2$ are not variable disjoint (see example \ref{exam1} and example \ref{exam2}). In this direction we prove that $v(I(H))=v(I(C_n))+v(I(C_m))-1$, where $H$ is the 1-clique sum of two cycles $C_n$ and $C_m$ of lenths $n$ and $m$ respectively. In \cite{antonino} Ficarra and Sgroi have given an explicit formula for $v(I)$, where $I$ is a monomial ideal in a polynomial ring with two variables. In this paper, we give explicit formula for $v(I)$, where $I$ is an $\m $-primary monomial ideal of $S$. In the study of $v$-number several attempts were made to compare $v(I)$ and $\reg(S/I)$. In \cite{jaramillo} and \cite{Saha}, the authors have proved that $v(I)\leq \reg(S/I)+1$ is satisfied for several classes of square free monomial ideals. However in \cite{Saha}, authors have given an example of a graph such that $v(I(G))>\reg (S/I(G))+1$. In the same paper, the authors have conjectured that for a connected graph $G$, $v(I(G)\leq \reg(S/I(G))+1$. In \cite{Civan} Civan has disproved the conjecture by showing that $v(I(G))$ can be arbitrarily larger that $\reg(S/I(G)$. In this paper we prove that for $\m$-primary monomial ideals $I$, $v(I)\leq \reg(S/I)$. Kodiyalam in \cite{Vijay} and Cutkosky, Herzog and Trung in \cite{Trung} independently have proved that $\reg (S/I^n)$ is a linear function in $n$ for $n\gg0$. Recently in \cite{antonino}, the authors have studied the asymptotic behaviour of $v$-number of powers of graded ideal of $S$. They have proved that $v(I^n)$ is bounded above and below by linear functions in $n$ for $n\gg0$. They have conjectured that if $I \subset S $ is a graded ideal with linear powers then $v(I^n)=\alpha(I)n-1$ for all $n\geq 1$. In the same paper, the authors have given affirmative answer to the conjecture for edge ideals with linear resolution, polymatriodal ideal and Hibi ideals. In this paper, independently we show that for a monomial ideal $I$, $v(I^n)$ is bounded above by a linear polynomial for $n\gg0$ with an explicit description of the linear polynomial. Further we prove that $v(I^n)$ attains the upper bound for some class of $\m$-primary monomial ideals and edge ideals.

The paper is organized as follows. In section 2, we recall some basic definitions and concepts of commutative algebra and combinatorics. Then in the following section, we give some explicit formula for $v$-number of path in terms of its number of vertices as follows:
 
\begin{mythm}{3.1}
     Let $P_n$ be a path with $n$ vertices. Then 
    \begin{center}
$v(P_n)=
\begin{cases}
   [\frac{n}{4}] & \text {if $n \equiv 0,1\Mod 4$ }
     \\ [\frac{n}{4}]+1 & \text {if $n \equiv 2, 3\Mod 4$}
\end{cases}$
\end{center}
\end{mythm}
By using this formula, we have also proved that $v(C_n)=1+v(P_{n-3})$ for $n\geq 5$ in Proposition \ref{c}. Then we give the $v$-number of 1-clique sum of path $P_m$ and cycle $C_n$ and 1-clique sum of two cycles $C_m$ and $C_n$. Also we have showed that $v(D_1*D_2)=\min\{v(D_1),v(D_2)\}$, where $D_1*D_2$ denotes the join of two graphs $D_1$ and $D_2$, in Theorem \ref{jOIN}. In Section 4, we give explicit form of $v$-number for $\mathfrak m$-primary monomial ideals in Theorem \ref{4.1}.
As a Corollary of this, we derive a part of result of Ficarra and Sgroi about $v$-number of any monomial ideals over two variables.
In \cite{antonino}, authors have provided a lower bound of $v$-number of monomial ideal by using its degree of generators. We give some upper bound for $\mathfrak m$-primary ideal in Proposition \ref{f}. We also prove that for any $\mathfrak m$-primary ideal $v(I^{s+1})\geq v(I)$ for large $s$. Saha and Sengupta in \cite[Proposition 3.11]{Saha} proved that for $\mathbf{X^G}\notin I^{n}$, $v(I^{n})\leq v(I^n:\mathbf{X}^\mathbf{G})+\deg \mathbf{X}^\mathbf{G} ,n\geq 1$, and we observe in Proposition \ref{4.6} that it is equality if $\mathbf{X}^\mathbf{G}$ is a proper divisor of corresponding monomial of $v(I^n)$. In \cite{antonino}, authors showed that $v(I^{n+1})$ is bounded above and below by linear functions and $v(I^{n+1})\leq v(I^n)+\omega(I)$ for $n\gg 0$, where $I$ is a graded ideal and $\omega(I)=\max\{d:{(I/\mathfrak{m}I)}_d\neq 0\}$. For a monomial ideal $I$, we give an explicit expression of a linear upper bound for $v(I^n)$ for large $n$ as follows:

\begin{mythm}{4.7}
     Let $I\subset S$ be a monomial ideal. Then there exists some positive integer $n_0$ such that $v(I^{n+1})\leq n\alpha(I) +d$ for all $n\geq n_0$, where $d$ is some positive integer.
\end{mythm}
We also have shown (example \ref{cycle5}) that $v(I(C_5)^{n+1})\leq 2n+1$ for $n\geq 1$, which is better bound than \cite[Proposition 2.9]{antonino}.
In Proposition 4.8 and 4.10, we get that for some special cases, the upper bound is achieved. At the end, we show that for $\mathfrak m$-primary monomial ideal $I$, $v(I)\leq\reg(S/I)$ and their difference can be arbitrarily large.

\section{Preliminaries}
Here in this section, we give some fundamental definitions, results, and notations which will be required throughout this paper.
\\In $S$, an element is said to be monomial if it is of the form $x_{1}^{a_1}\ldots x_{t}^{a_t}$ where $(a_1,\ldots,a_t)\in \mathbb{Z}_{\geq 0}^{t}$ and $\mathbb{Z}_{\geq 0}$ is the set of all non-negative integers. We write $\mathbf{{X}^{a}}=x_{1}^{a_1}\ldots x_{t}^{a_t}$, where $\mathbf{a}=(a_1,\ldots,a_t)$.
An ideal $I\subset S$ is said to be a monomial ideal if it is minimally generated by a set of monomials in $S$. For a monomial ideal, we denote the set of unique minimal generators of $I$ by $\mathscr G(I)$. For a set $A\subseteq S$, $E(A)\subseteq \mathbb{Z}_{\geq 0}^{t}$ stands for the set of all exponent vectors of elements of $A$.

\begin{definition}
    Let $I,J\subset S$ be two ideals. Then $(I:J)=\{f\in S~|~fg\in I~\mbox{for all}~g\in J\}$. For $f\in S$, $(I:f)=(I:\langle f\rangle).$ By \cite[ Proposition 1.2.2]{Hibi} for monomial ideal $I$ and monomial $f\in S$, 
    $$(I:f)=\left\langle \displaystyle\frac{u}{gcd(u,f)}~|~u\in\mathscr{G}(I)\right\rangle.$$
\end{definition}

\begin{definition}
    A clutter $\mathcal{G}$ is defined by a pair $(V(\mathcal{G}), E(\mathcal{G}))$ where $V(\mathcal{G})$ denotes the set of all vertices of $\mathcal{G}$ and  $E(\mathcal{G})$, called edges, is the family of subsets of $V(\mathcal{G})$ such that they are pairwise incomparable with respect to inclusion. A simple graph $\mathcal{G}$ is a clutter such that the cardinality of each element of $E(\mathcal{G})$ is two. 
\end{definition}

Suppose $\mathcal{G}$ is a clutter with $V(\mathcal{G})=\{x_1,x_2,\ldots,x_t\}$ and $E(\mathcal{G})$ as edge set. We observe each vertex $x_i$ as a variable of the polynomial ring $S=K[x_1,x_2,\ldots,x_t]$ in $t$ variables over a fixed field $K$ and by defining the following ideal we can relate each clutter to an ideal.
The edge ideal associated with clutter $\mathcal{G}$ is the monomial ideal
\begin{center}
    $I(\mathcal{G})=\langle\prod_{x_i\in e}x_i~|~e\in E(\mathcal{G})\rangle.$
\end{center}
It is a square-free ideal. Hence we can get $v$-number of edge ideal. 

    


Next, we recall Some definitions of special classes of graphs that we have considered in this paper:
\begin{definition}
    A simple graph is said to be a path if its vertices can be ordered such that two vertices are adjacent if and only if they are consecutive in the list. If a path has $n$ vertices, then it is denoted by $P_n$.
\end{definition}
\begin{definition}
    A cycle is a simple graph with the same number of vertices and edges whose vertices can be placed around a circle so that two vertices are adjacent if and only if they appear consecutively along the circle. A cycle with $n$ vertices is denoted by $C_n$.
\end{definition}
\begin{definition}
    Let $G_1$ and $G_2$ be two simple graphs. Suppose that $G_1\cap G_2=K_r$ is a complete graph of order $r$, where $G_1\neq K_r$ and $G_2\neq K_r$. Then $H=G_1\cup G_2$ is said to be $r$-clique sum of $G_1$ and $G_2$.
\end{definition}

\begin{definition}\normalfont\label{joingraph}
    Let $r\geq 2$ be an integer and $G_1,\ldots,G_r$ be simple graphs with pairwise disjoint vertex sets. Then join of $G_1,\ldots,G_r,$ denoted by $G_1*\cdots*G_r$, is a simple graph over the vertex set $V(G_1)\sqcup\ldots\sqcup V(G_r)$ with edge set $E(G_1)\cup \ldots\cup E(G_r)\cup \{(x,y): x\in V(G_i),y\in V(G_j)~\mbox{with} ~i< j~\}$.
\end{definition}
Let us recall some invariants from graph theory.
\begin{definition}

    A subset $C\subset V(\mathcal{G})$ is said to be a vertex cover of a graph $\mathcal{G}$ if for any $e\in E(\mathcal{G})$, $e\cap C\neq\phi$. If vertex cover is minimal with respect to inclusion, then it is a minimal vertex cover.
\end{definition}
\begin{definition}
    A subset $A\subset V(\mathcal{G})$ is stable or independent if $e\not\subseteq A$ for all $e\in E(\mathcal{G})$. A maximal independent set is an independent set, which is maximal with respect to inclusion. 
\end{definition}

    For a stable set $A$ in $\mathcal{G}$, the neighbor set of $A$ in $\mathcal{G}$, denoted by $N_{\mathcal{G}}(A)$, is defined by
    $$N_{\mathcal{G}}(A)=\{x_i\in V(\mathcal{G})~|~\{x_i\}\cup A ~\mbox{contains}~ \text{an edge}~ of~ \mathcal{G}\}.$$
We write $N_{\mathcal{G}}[A]:=N_{\mathcal{G}}(A)\cup A$ and let
    $\mathcal{A}_\mathcal{G}$ be the collection of those stable sets $A$ of $\mathcal{G}$ such that neighbour set of $A$ is minimal vertex cover of $\mathcal{G}$.
The next Theorem gives the combinatorial description of $v$-number.
\begin{theorem}\label{graph}\cite[Theorem 3.5]{jaramillo}
    Let $I$ be the edge ideal of a clutter $\mathcal{G}$. Then $v(I)=\min\{|A|:A\in \mathcal{A}_\mathcal{G}\}$.
\end{theorem}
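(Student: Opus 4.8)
The plan is to prove the equality by establishing the two inequalities $v(I)\le\min\{|A|:A\in\mathcal{A}_{\mathcal{G}}\}$ and $v(I)\ge\min\{|A|:A\in\mathcal{A}_{\mathcal{G}}\}$ separately, after recording three preliminary facts. \emph{(i)} Because $I=I(\mathcal{G})$ is a squarefree monomial ideal, it is radical, so $\Ass(I)=\Min(I)$, and these minimal primes are exactly the ideals $\mathcal{P}_C=\langle x_i : x_i\in C\rangle$ as $C$ ranges over the minimal vertex covers of $\mathcal{G}$. \emph{(ii)} For a subset $W\subseteq V(\mathcal{G})$ write $\mathbf{X}^{W}=\prod_{x_i\in W}x_i$, so that $\mathscr{G}(I)=\{\mathbf{X}^{e}:e\in E(\mathcal{G})\}$. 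Applying the colon formula for monomial ideals recalled above to a monomial $f$ with support $A=\supp(f)$ gives $\mathbf{X}^{e}/\gcd(\mathbf{X}^{e},f)=\mathbf{X}^{e\setminus A}$, whence $(I:f)=\langle \mathbf{X}^{e\setminus A}:e\in E(\mathcal{G})\rangle=(I:\mathbf{X}^{A})$ depends only on the support of $f$. \emph{(iii)} If $(I:f)=\mathcal{P}$ is prime for a homogeneous $f=\sum_j c_j\mathbf{X}^{a_j}$, then for a monomial $g$ one has $gf\in I$ iff $g\mathbf{X}^{a_j}\in I$ for every $j$, since no cancellation occurs among the distinct monomials $g\mathbf{X}^{a_j}$; hence the monomial prime $\mathcal{P}$ equals $\bigcap_j(I:\mathbf{X}^{a_j})$, and $\prod_j(I:\mathbf{X}^{a_j})\subseteq\mathcal{P}$ together with primality forces $(I:\mathbf{X}^{a_j})=\mathcal{P}$ for some $j$ with $\deg\mathbf{X}^{a_j}=\deg f$. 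Thus the minimum defining $v(I)$ is attained by a monomial.

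\emph{The inequality $v(I)\le\min\{|A|\}$.} Fix $A\in\mathcal{A}_{\mathcal{G}}$ and take $f=\mathbf{X}^{A}$, a monomial of degree $|A|$. Since $A$ is stable, no edge lies in $A$, so $e\setminus A\neq\emptyset$ for every $e$, and moreover $N_{\mathcal{G}}(A)\cap A=\emptyset$ (otherwise $A=\{x_i\}\cup A$ would contain an edge). Writing $C=N_{\mathcal{G}}(A)$, which is a minimal vertex cover by the definition of $\mathcal{A}_{\mathcal{G}}$, I would verify $(I:\mathbf{X}^{A})=\mathcal{P}_C$ using (ii): each generator $\mathbf{X}^{e\setminus A}$ lies in $\mathcal{P}_C$ because $e\cap C\subseteq e\setminus A$ and $e\cap C\neq\emptyset$ ($C$ is a cover), giving $\subseteq$; and each $x_i\in C=N_{\mathcal{G}}(A)$ sits in an edge $e\subseteq\{x_i\}\cup A$ with $x_i\notin A$, so $e\setminus A=\{x_i\}$ is a generator, giving $\supseteq$. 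Hence $(I:\mathbf{X}^{A})=\mathcal{P}_C\in\Ass(I)$ and $v(I)\le|A|$.

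\emph{The inequality $v(I)\ge\min\{|A|\}$.} Let $f$ witness $v(I)$; by (iii) we may take $f$ to be a monomial with $(I:f)=\mathcal{P}_C$ for a minimal vertex cover $C$, and set $A=\supp(f)$, so $|A|\le\deg f=v(I)$. If some edge satisfied $e\subseteq A$, then $\mathbf{X}^{e}\mid f$, forcing $f\in I$ and $(I:f)=S$, a contradiction; hence $A$ is stable. By (ii), $\mathcal{P}_C=(I:f)=(I:\mathbf{X}^{A})=\langle\mathbf{X}^{e\setminus A}:e\in E(\mathcal{G})\rangle$. Reading off the variables in this monomial ideal exactly as before, $x_i$ lies in it iff some $e\setminus A=\{x_i\}$, i.e. iff $x_i\in N_{\mathcal{G}}(A)$; therefore $N_{\mathcal{G}}(A)=C$, a minimal vertex cover, so $A\in\mathcal{A}_{\mathcal{G}}$ and $\min\{|A'|:A'\in\mathcal{A}_{\mathcal{G}}\}\le|A|\le v(I)$. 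Combining the two inequalities yields the asserted equality.

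The two colon computations are the routine part; they are symmetric and rest entirely on the single identity $\mathbf{X}^{e}/\gcd(\mathbf{X}^{e},f)=\mathbf{X}^{e\setminus A}$ of (ii). The step I would treat most carefully, and which I expect to be the main obstacle, is the reduction (iii) to monomial witnesses: although $(I:f)$ need not be a monomial ideal for a general homogeneous $f$, its equality with the monomial prime $\mathcal{P}$ must be leveraged to write $\mathcal{P}=\bigcap_j(I:\mathbf{X}^{a_j})$ and then extract a single monomial colon equal to $\mathcal{P}$ of the same degree. A secondary subtlety is fact (i): it is what permits identifying the witnessing prime with some $\mathcal{P}_C$ and, in the second inequality, guarantees that the neighbor set $N_{\mathcal{G}}(A)$ recovered is a genuine \emph{minimal} vertex cover rather than merely a cover.
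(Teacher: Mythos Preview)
The paper does not prove this statement; it is quoted in the preliminaries as \cite[Theorem 3.5]{jaramillo} and used as a black box throughout Section~3. There is therefore no proof in the present paper to compare your attempt against.

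That said, your argument is a correct and self-contained proof of the cited result. The two colon computations establishing $(I:\mathbf{X}^{A})=\mathcal{P}_{N_{\mathcal{G}}(A)}$ and its converse are exactly the standard ones, and your reduction (iii) to monomial witnesses is valid: since $I$ is monomial, a monomial $g$ lies in $(I:f)$ iff it lies in $\bigcap_j(I:\mathbf{X}^{a_j})$, and as both $\mathcal{P}$ and this intersection are monomial ideals they coincide; primality then isolates a single $(I:\mathbf{X}^{a_j})=\mathcal{P}$. The only cosmetic point is that in the second inequality you could streamline by taking $f$ squarefree from the start (replacing $f$ by $\mathbf{X}^{\supp(f)}$ preserves the colon by your observation (ii) and can only decrease degree), which makes $|A|=\deg f$ rather than $|A|\le\deg f$; but the inequality you wrote suffices.
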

Now onwards $v$-number of edge ideal of a graph $G$ is denoted by $v$-number of $G$ and we write $v(G):=v(I( G))$.
Let $A\in \mathcal{A}_{G}$ be such that $v(I)=|A|$, then $A$ is said to be corresponding stable set of $v$-number of ${G}$.
\\For a non-zero graded ideal $I$, define $\alpha (I):=\min\{\deg (f)~|~f\in I\backslash \{0\}\}$. We get a lower bound for $v$-number for any monomial in terms of the degree of its generator from the following statement.
\begin{proposition}\normalfont\label{e}
(\cite[Proposition 4.3.]{antonino}) Let $I\subset S$ be a monomial ideal. Then $v_\mathcal{P}(I)\geq \alpha(I)-1$ for all $\mathcal P\in \Ass(I)$.

\end{proposition}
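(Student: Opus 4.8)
The plan is to exploit the elementary containment $I\subseteq(I:f)$ together with the fact that $\alpha(I)$ is, by definition, the least degree of any nonzero element of $I$. Fix an associated prime $\mathcal{P}\in\Ass(I)$ and choose a homogeneous $f\in S_k$ with $k=v_\mathcal{P}(I)$ and $(I:f)=\mathcal{P}$; such an $f$ exists precisely because the minimum defining $v_\mathcal{P}(I)$ is attained. Note $f\neq 0$, since $(I:0)=S\neq\mathcal{P}$. The goal is then simply to produce from $f$ a nonzero element of $I$ whose degree exceeds $\deg f$ by exactly one.

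First I would observe that $I\subseteq(I:f)$: every $g\in I$ satisfies $gf\in I$ because $I$ is an ideal, so $g\in(I:f)$. Hence $I\subseteq\mathcal{P}$. Because $I$ is a nonzero monomial ideal it contains some minimal generator $u\in\mathscr{G}(I)$, and $u\in\mathcal{P}$; as $\mathcal{P}$ is prime and $u$ is a monomial, at least one variable $x_i$ dividing $u$ must lie in $\mathcal{P}$. This is the one structural point where I invoke that $\mathcal{P}$ is prime, and it guarantees that $\mathcal{P}$ contains a variable.

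With such a variable $x_i\in\mathcal{P}=(I:f)$ in hand, the definition of the colon ideal gives $x_i f\in I$. Since $f\neq 0$ and $S$ is a domain, $x_i f$ is a nonzero element of $I$ of degree $\deg f+1=v_\mathcal{P}(I)+1$. By the definition of $\alpha(I)$ as the minimal degree of a nonzero element of $I$, this forces $\alpha(I)\leq v_\mathcal{P}(I)+1$, which rearranges to the claimed bound $v_\mathcal{P}(I)\geq\alpha(I)-1$.

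I expect no serious obstacle: the argument reduces to a one-line degree count once the element $x_i f$ is identified. The only points requiring a moment of care are that $\mathcal{P}$ genuinely contains a variable, which I secure above from $I\neq 0$ and the primality of $\mathcal{P}$, and that $x_i f$ is nonzero, which holds since $f\neq 0$. Notably, the bound does not require $f$ itself to be a monomial, so no reduction to the monomial case is needed.
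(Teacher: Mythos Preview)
Your proof is correct. The paper itself does not supply a proof of this proposition; it is quoted verbatim from \cite[Proposition~4.3]{antonino} and simply cited as a known bound, so there is no in-paper argument to compare against. Your degree-counting argument via $x_i f\in I$ is the natural one and is essentially how the result is established in the cited reference as well. One minor simplification: for a monomial ideal $I$ every associated prime is itself a monomial prime, i.e.\ generated by a subset of the variables, so $\mathcal{P}$ automatically contains a variable and your detour through a generator $u\in\mathscr{G}(I)$ is not strictly necessary---but it does no harm.
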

The following statement is about upper bound of $v$-number of $I$.
\begin{proposition}\normalfont\label{Saha}
    \cite[Proposition 3.11]{Saha}
    Let $I$ be a monomial ideal and $f$ be a monomial such that $f\notin I$. Then $v(I)\leq v(I:f)+\deg f$.
\end{proposition}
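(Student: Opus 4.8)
The plan is to manufacture, out of an element $g$ that witnesses $v(I:f)$, the element $fg$, and to check that $fg$ witnesses an associated prime of $I$ in degree $\deg f + v(I:f)$; the inequality then drops out of the definition of $v(I)$.

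First I would set $J:=(I:f)$. Since $f\notin I$ we have $1\notin J$, so $J$ is a proper ideal; as $I$ is a monomial ideal and $f$ a monomial, $J$ is again monomial, so $\Ass(S/J)\neq\emptyset$ and the quantity $v(J)=v(I:f)$ is defined. The one structural fact I would invoke is the containment $\Ass(S/(I:f))\subseteq\Ass(S/I)$: multiplication by $f$ is a graded $S$-module homomorphism $S\to S/I$ with kernel exactly $(I:f)$, hence it induces an isomorphism of $S/(I:f)$ onto the graded submodule $(fS+I)/I$ of $S/I$, and associated primes only shrink when passing to a submodule.

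Now choose, by definition of $v(J)$, a homogeneous element $g\in S_{v(J)}$ together with $\mathcal P\in\Ass(S/J)$ such that $(J:g)=\mathcal P$; in the graded setting the witnessing element may be taken homogeneous. The elementary colon identity $h\cdot fg\in I \iff hg\in (I:f)$ gives $(I:fg)=((I:f):g)=(J:g)=\mathcal P$. By the containment above $\mathcal P\in\Ass(S/I)$, and in particular $\mathcal P\neq S$, so $fg\notin I$ and $fg$ is a legitimate witness for $v(I)$. Since $fg$ is homogeneous of degree $\deg f+v(I:f)$, the definition of $v(I)$ yields $v(I)\le \deg(fg)=v(I:f)+\deg f$.

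The argument is short; the only points that deserve a line of justification are the submodule inclusion $\Ass(S/(I:f))\subseteq\Ass(S/I)$ and the remark that one may take the witness $g$ homogeneous. I do not anticipate a genuine obstacle: once the identity $(I:fg)=((I:f):g)$ is on the table, the remainder is a degree count.
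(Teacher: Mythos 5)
Your argument is correct and complete: the colon identity $(I:fg)=((I:f):g)$ together with the inclusion $\Ass(S/(I:f))\subseteq\Ass(S/I)$ (via the embedding $S/(I:f)\cong (fS+I)/I\subseteq S/I$) is exactly the standard route, and it matches the proof of the cited source \cite[Proposition 3.11]{Saha}; the paper itself quotes the result without reproving it. The only remark worth adding is that even the submodule inclusion is dispensable here, since by the paper's definition of associated prime the equality $(I:fg)=\mathcal P$ with $\mathcal P$ prime and $fg$ homogeneous already exhibits $\mathcal P\in\Ass(I)$ directly.
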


Next, we will define another important invariant of commutative algebra.
    Let $M$ be a finitely generated $S$ module. Then we can write the graded minimal free
resolution of $M$ in the following form
    $$0\rightarrow\displaystyle\bigoplus_{j} S(-j)^{\beta_{g,j}}\rightarrow\cdot\cdot\cdot\rightarrow\displaystyle\bigoplus_j S(-j)^{\beta_{1,j}}\rightarrow \displaystyle\bigoplus S(-j)^{\beta_{0,j}}\rightarrow M\rightarrow 0.$$
    Here $\beta_{ij}$ is the $(i,j)$-th graded Betti number of $M$ and $S(-j)$ denote the polynomial ring shifted in degree $j$.
\begin{definition}
    The Castlenuovo-Mumford regularity (or regularity in short) of $M$, denoted as $\reg(M)$, is defined by 
    $$\reg(M)=\max\{j-i~|~\beta_{i,j}\neq 0\}.$$
\end{definition}
The regularity can also be defined via the vanishings of local cohomology modules with respect to the unique maximal homogeneous ideal
$\mathfrak m=\langle x_1,x_2,\ldots,x_t\rangle$. Let us define for $i\geq 0$,
\begin{center}
    $a_i(M):=
    \begin{cases}
        \max\{j~|~H_{\mathfrak m}^{i}(M)_j\neq0\} & \text {if $H_{\mathfrak m}^{i}(M)\neq 0$}
        \\-\infty & \text{otherwise}
    \end{cases}$
\end{center}
Then $\reg (M)=\max\{a_i(M)+i~|~i\geq 0\}$.
\begin{center}
\section{\texorpdfstring{$v$}a-number of some of graphs}
\end{center}
In this section, we give an explicit formula for the $v$-number of a path $P_n$ in terms of its number of vertices. We further show the relation between the $v$-number of a path $P_n$ and that of a cycle $C_n$. We also discuss the $v$-numbers of graphs like clique sum and join of graphs.

\begin{theorem}
\label{a}
    Let $P_n$ be a path with $n$ vertices. Then
    \begin{center}
$v(P_n)=
\begin{cases}
   [\frac{n}{4}] & \text {if $n \equiv 0,1\Mod 4$ }
     \\ [\frac{n}{4}]+1 & \text {if $n \equiv 2, 3\Mod 4$}
\end{cases}$
\end{center}
\end{theorem}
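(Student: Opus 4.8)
The strategy is to use the combinatorial description from Theorem \ref{graph}: $v(P_n) = \min\{|A| : A \in \mathcal{A}_{P_n}\}$, where $\mathcal{A}_{P_n}$ is the collection of stable sets $A$ whose neighbor set $N_{P_n}(A)$ is a \emph{minimal} vertex cover of $P_n$. So I need (i) a lower bound — every such $A$ has size at least the claimed value — and (ii) an explicit construction of a stable set of exactly that size whose neighborhood is a minimal vertex cover. Label the vertices $x_1, \dots, x_n$ in path order, so edges are $\{x_i, x_{i+1}\}$ for $1 \le i \le n-1$.

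First I would record the structural constraint. If $A$ is a stable set and $C = N_{P_n}(A)$ is a minimal vertex cover, then: (a) $C$ is a vertex cover, so every edge $\{x_i,x_{i+1}\}$ meets $C$, i.e. $C$ contains no two consecutive "gaps"; (b) minimality of $C$ means every $x_j \in C$ has a \emph{private} edge — an edge covered only by $x_j$ — which forces its other endpoint to lie outside $C$; (c) since $C = N(A)$, each $x_j \in C$ is adjacent to some element of $A$, and since $A$ is stable, $A \cap C = \emptyset$ (an element of $A$ adjacent to another element of $A$ would create an edge inside $A$). Combining these, the vertices split into the "cover" set $C$ and its complement, $A$ sits inside the complement, and $A$ must "dominate" all of $C$ via edges. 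The key local picture along the path is that around each vertex of $A$ we consume a bounded-length block of vertices, and these blocks must tile or nearly tile $\{x_1,\dots,x_n\}$; a careful count of how many path-vertices are "used up" per element of $A$ (roughly $4$ in the interior, fewer near the two ends) yields the lower bound $|A| \ge \lceil n/4\rceil$ in the $0,1 \bmod 4$ cases and one more in the $2,3 \bmod 4$ cases. I would make this rigorous by induction on $n$, splitting off the first few vertices: analyze the possible positions of the element of $A$ closest to the $x_1$-end (it must be $x_1, x_2$, or $x_3$, since otherwise either $x_1$ is uncovered or the cover is not minimal near the end), remove the corresponding initial segment, and recurse on a shorter path, tracking the residue of $n$ modulo $4$.

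For the upper bound I would exhibit an explicit $A$. The natural candidate is a periodic pattern with period $4$: take $A = \{x_2, x_6, x_{10}, \dots\}$ (elements $x_{4k+2}$), which is stable, and whose neighbor set is $\{x_1, x_3, x_5, x_7, \dots\}$-type union that one checks is a minimal vertex cover — each chosen vertex $x_{4k+1}$ or $x_{4k+3}$ has a private edge to an uncovered neighbor. I would then handle the four residue classes of $n$ separately, adjusting the last block (and possibly shifting the whole pattern, or adding one extra vertex to $A$ near the far end when $n \equiv 2, 3 \bmod 4$) so that the neighbor set is exactly a minimal vertex cover of the full $P_n$; counting the elements of $A$ gives $\lceil n/4 \rceil$ or $\lceil n/4\rceil + 1$ as claimed.

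The main obstacle I anticipate is the lower bound bookkeeping: verifying that minimality of the vertex cover (condition (b) above, the private-edge requirement) genuinely forces the "$4$ vertices per element of $A$" density rather than something sparser, and handling the boundary effects at both ends of the path cleanly in the induction. The residues $2$ and $3 \bmod 4$ are the delicate cases, where one must show that no clever asymmetric placement beats the $\lceil n/4\rceil + 1$ bound — essentially, that the two ends of the path cannot simultaneously be "cheap." I would isolate this as a short lemma about minimal vertex covers of paths that are realized as neighbor sets of stable sets, and then the theorem follows by combining it with the explicit construction.
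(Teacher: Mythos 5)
Your overall strategy --- the combinatorial formula of Theorem \ref{graph}, a ``one element of $A$ per four vertices'' density lower bound, and an explicit periodic stable set for the upper bound --- is essentially the paper's. However, the lower-bound targets you state are numerically wrong, and as written that half of the plan would fail. The theorem's $[\tfrac{n}{4}]$ is the floor $\lfloor n/4\rfloor$, not the ceiling: for $n\equiv 1\pmod 4$ you propose to prove $|A|\ge\lceil n/4\rceil=\lfloor n/4\rfloor+1$, which is false ($v(P_5)=1$, witnessed by $A=\{x_3\}$ with $N(A)=\{x_2,x_4\}$ a minimal vertex cover, while $\lceil 5/4\rceil=2$); for $n\equiv 2,3\pmod 4$ you propose $|A|\ge\lceil n/4\rceil+1=\lfloor n/4\rfloor+2$, also false ($v(P_6)=2$, while your target is $3$). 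The single correct statement, uniform in the residue class, is $|A|\ge\lceil (n-1)/4\rceil$, which agrees with the theorem's value in all four cases.

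Once the target is corrected, the lower bound is much simpler than you anticipate, and the paper's argument shows why: if $A$ is stable and $N_{P_n}(A)$ is a vertex cover, then each $N_{P_n}(\{x_i\})\subseteq\{x_{i-1},x_{i+1}\}$ covers at most $4$ edges, so $4|A|\ge n-1$, i.e.\ $|A|\ge\lceil(n-1)/4\rceil$. This one-line global edge count needs neither minimality of the cover (so the ``private edge'' bookkeeping you isolate as the main obstacle is irrelevant to the lower bound --- it applies to every stable set with vertex-cover neighborhood, a superset of $\mathcal{A}_{P_n}$), nor the induction peeling off initial segments, nor any boundary analysis: the endpoints only help, since their neighborhoods cover fewer edges. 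Minimality of $N(A)$ only has to be verified for the explicit witnesses in the upper bound, where your periodic pattern $\{x_2,x_6,\dots\}$ (or the paper's $\{x_3,x_7,\dots,x_{4t-1}\}$ for $n\equiv 0,1$, switching to $\{x_2,x_6,\dots,x_{4t+2}\}$ for $n\equiv 2,3$) does the job. With those two repairs your plan closes to a complete proof.
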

\begin{proof}
    We prove the Theorem by dividing it into the following four cases:
  \\ Case-1: Let $n=4t$ for some $t\in \mathbb N$. We proceed by induction on $t$. For $t=1$, we have $v(P_{4})=1$. A stable set corresponding to $v(P_4)$ is given by $A_4=\{x_3\}$.
   
   \vspace*{0.5 cm}
   \begin{center}

   \begin{tikzpicture}
    \Vertex [x=3,label=$x_1$,position=below,size=0.1,color=black]{A}
    \Vertex [x=4.5,label=$x_2$,position=below,size=0.1,color=black]{B}
\Edge (A)(B)
\Vertex [x=6,label=$x_3$,position=below,size=0.1,color=black]{C}
\Edge (C)(B)
\Vertex [x=7.5,label=$x_4$,position=below,size=0.1,color=black]{K}
\Edge (C)(K)
\end{tikzpicture}
 \end{center}

 Suppose the statement is true for $n=4t$ and we prove the statement for $n=4(t+1)$. By induction hypothesis $v(P_{4t})=t$ with corresponding stable set $A_{4t}=\{x_3,x_7,\ldots,x_{4t-1}\}$.

  \begin{center}

   \begin{tikzpicture}[scale=0.9]
    \Vertex [x=1,label=$x_1$,position=below,size=0.1,color=black]{A}
    \Vertex [x=2,label=$x_2$,position=below,size=0.1,color=black]{B}
\Edge (A)(B)
\Vertex [x=3,label=$x_3$,position=below,size=0.1,color=black]{C}
\Edge (C)(B)
\Vertex [x=4,label=$x_4$,position=below,size=0.1,color=black]{D}
\Edge (C)(D)
\Vertex [x=5,size=0.05,color=black]{E}

\Vertex [x=6,size=0.05,color=black]{G}

\Vertex [x=7,label=$x_{4t-1}$,position=below,size=0.1,color=black]{H}
    \Vertex [x=8,label=$x_{4t}$,position=below,size=0.1,color=black]{I}
\Edge (H)(I)

\end{tikzpicture}
\end{center}
Now suppose $n=4(t+1)$.
\vspace*{0.5 cm}
\begin{center}
\begin{tikzpicture}
    \Vertex [x=1,label=$x_1$,position=below,size=0.1,color=black]{A}
    \Vertex [x=2,label=$x_2$,position=below,size=0.1,color=black]{B}
\Edge (A)(B)
\Vertex [x=3,label=$x_3$,position=below,size=0.1,color=black]{C}
\Edge (C)(B)
\Vertex [x=4,label=$x_4$,position=below,size=0.1,color=black]{D}
\Edge (C)(D)
\Vertex [x=4.5,size=0.05,color=black]{E}

\Vertex [x=5,size=0.05,color=black]{F}
\Vertex [x=5.5,size=0.05,color=black]{G}

\Vertex [x=6,label=$x_{4t-1}$,position=below,size=0.1,color=black]{H}
    \Vertex [x=7,label=$x_{4t}$,position=below,size=0.1,color=black]{I}
\Edge (H)(I)
\Vertex [x=8,label=$x_{4t+1}$,position=below,size=0.1,color=black]{J}
\Edge (I)(J)
\Vertex [x=9,label=$x_{4t+2}$,position=below,size=0.1,color=black]{K}
\Edge (J)(K)
\Vertex [x=10,label=$x_{4t+3}$,position=below,size=0.1,color=black]{L}
\Edge (L)(K)
\Vertex [x=11,label=$x_{4t+4}$,position=below,size=0.1,color=black]{M}
\Edge (M)(L)
\end{tikzpicture}
\end{center} 
Clearly, from the above figure,
 $N_{P_{4t+4}}(\{x_1\})=\{x_2\}$ and $N_{P_{4t+4}}(\{x_{4t+4}\})=\{x_{4t+3}\}$, and each of which covers two edges. Also,
 $N_{P_{4t+4}}(\{x_2\})=\{x_1,x_3\}$ and $N_{P_{4t+4}}(\{x_{4t+3}\})=\{x_{4t+2},x_{4t+4}\}$ and each of which covers three edges.
For any $~ 3\leq i \leq 4t+2$, $N_{P_{4t+4}}(\{x_i\})=\{x_{i-1},x_{i+1}\}$ which covers exactly four edges. 
Let $B$ be a stable set of $P_{4(t+1)}$ with cardinality $s$, such that $N_{P_{4t+4}}(B)$ is a vertex cover of $P_{4t+4}$. For $1\leq i\leq 4t+4$, $N_{P_{4t+4}}(\{x_i\})$ can cover at most $4$ edges, then $4s\geq 4t+3$, which implies $ s\geq (t+1) >t$
. Hence $v(P_{4t+4})> t.$
Now $A_{4t}\cup\{x_{4t+3}\}$ is a stable set and $N_{P_{4t+4}}(A_{4t}\cup\{x_{4t+3}\})$ is vertex cover of $P_{4t+4}.$ Therefore, $v(P_{4t+4})\leq (t+1).$ Thus $v(P_{4t+4})=t+1$ with stable set $A_{4t+4}=\{x_3,x_7,\ldots,x_{4t-1},x_{4t+3}\}$. 
\\By similar arguments, we have the following:
\\Case-2: Let $n=4t+1$, then $v(P_{n})=[\frac{n}{4}]$  with corresponding stable set $A_n=\{x_3,x_7,\ldots,x_{4t-1}\}$. 
\\Case-3: Let $n=4t+2$, then $v(P_{n})=[\frac{n}{4}]+1$  with corresponding stable set $A_n=\{x_2,x_6,\ldots,x_{4t+2}\}$.
\\Case-4: Let $n=4t+3$, then $v(P_{n})=[\frac{n}{4}]+1$  with corresponding stable set $A_n=\{x_2,x_6,\ldots,x_{4t+2}\}$.\end{proof}
In the following remark, we give another set of stable set corresponding to $v(P_n)$.
\begin{remark}
Note that in the proof of the above Theorem the corresponding stable set is not unique.  
    For $n=4t$,  $A_n=\{x_2,x_6,\ldots,x_{4t-2}\}$ is another stable set corresponding to $v(P_n)$. Similarly, for $n=4t+1$, $n=4t+2$ and $n=4t+3$, $A_n=\{x_2,x_6,\ldots,x_{4t-2}\}$, $A_n=\{x_1,x_5,\ldots,x_{4t+1}\}$ and $A_n=\{x_1,x_5,\ldots,x_{4t+1}\}$ are stable set corresponding to $V(P_{n})$ respectively.
\end{remark}
The next Lemma will be useful in proving the relation between the $v$-number of a path and a cycle.
\begin{lemma} \label{b}
Let $P_n$ be a path with $n$ vertices. Then any stable set corresponding to $v(P_n)$ can not contain two endpoints together.
\end{lemma}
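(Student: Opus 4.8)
The plan is to argue by contradiction using the combinatorial characterization of the $v$-number from Theorem \ref{graph}. Label the vertices of $P_n$ as $x_1, x_2, \ldots, x_n$ in path order, so the two endpoints are $x_1$ and $x_n$ and the edges are $\{x_i, x_{i+1}\}$ for $1 \le i \le n-1$. Suppose $A$ is a stable set with $N_{P_n}(A)$ a minimal vertex cover and $|A| = v(P_n)$, and suppose for contradiction that $x_1, x_n \in A$. First I would record the local fact already used in the proof of Theorem \ref{a}: the neighbor set of a single interior vertex $x_i$ ($3 \le i \le n-2$) covers exactly the four edges incident to $x_{i-1}$ or $x_{i+1}$, while $N_{P_n}(\{x_1\}) = \{x_2\}$ and $N_{P_n}(\{x_n\}) = \{x_{n-1}\}$ each cover only two edges; likewise $x_2$ and $x_{n-1}$ cover only three edges each.

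Next I would exploit \emph{minimality} of the vertex cover $N_{P_n}(A)$. Since $x_1 \in A$, we have $x_2 \in N_{P_n}(A)$; because $A$ is stable, $x_2 \notin A$, and since $N_{P_n}(A)$ is a \emph{minimal} vertex cover, the vertex $x_2$ must be the unique cover of some edge, which forces $x_3 \notin N_{P_n}(A)$ (otherwise the edge $\{x_2,x_3\}$ and the edge $\{x_1,x_2\}$ are both covered by other vertices, contradicting that $x_2$ is irredundant — more precisely $x_1,x_3 \notin N_{P_n}(A)$ would be needed and $x_1 \in A$ gives $x_1 \notin N_{P_n}(A)$ since $P_n$ has no loops, so the edge $\{x_1,x_2\}$ is covered only by $x_2$, which is fine, but then we also need $x_3\notin N_{P_n}(A)$ is not forced — I will instead argue via an exchange). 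The cleaner route: since $x_3 \notin A$ is possible or not, I would show directly that replacing the endpoint by an interior vertex produces a strictly smaller competitor, contradicting minimality of $|A|$. Concretely, consider $A' = (A \setminus \{x_1\})$. Then $N_{P_n}(A')$ still covers every edge except possibly $\{x_1, x_2\}$; but $x_2 \in N_{P_n}(A')$ iff some neighbor of $x_2$ other than $x_1$ lies in $A'$, i.e. $x_3 \in A$. If $x_3 \in A$, then $A'$ is a smaller stable set whose neighbor set is still a vertex cover (its minimality can be repaired by passing to a minimal sub-cover, which does not affect that it came from the stable set $A'$ — here I must be careful that $\mathcal{A}_G$ requires $N_G(A)$ to be \emph{exactly} a minimal vertex cover), contradicting $|A| = v(P_n)$. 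If $x_3 \notin A$, then $x_3 \notin N_{P_n}(A)$ is impossible only if $x_4 \in A$ or $x_2 \in A$; since $x_2 \notin A$ by stability, the edge $\{x_3,x_4\}$ forces $x_4 \in A$ or $x_3$'s coverage comes from $x_2 \in N$... and I would push this domino forward.

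The main obstacle — and the step I would spend the most care on — is the bookkeeping around the requirement in the definition of $\mathcal{A}_G$ that $N_G(A)$ be \emph{precisely} a minimal vertex cover (not merely a vertex cover), since a naive "delete $x_1$" exchange may leave one edge uncovered or may produce a non-minimal cover. The robust fix is to use the explicit corresponding stable sets $A_n$ exhibited in Theorem \ref{a} and its Remark, together with the structure of minimal vertex covers of a path: a minimal vertex cover of $P_n$ corresponds to choosing, greedily from the left, covering vertices with no two consecutive "uncovered" vertices and no covered vertex all of whose incident edges are otherwise covered. I would show that if $x_1, x_n$ were both in $A$ then the induced coverage pattern near $x_1$ behaves like the $n \equiv 2,3$ "inefficient" endpoint configuration at \emph{both} ends, so the count of vertices needed to cover the remaining $n-4$ interior edges forces $|A| \ge v(P_{n-4}) + 2 > v(P_n)$ using the formula of Theorem \ref{a}; comparing the four residue classes $n \bmod 4$ via that closed form yields the strict inequality in every case, contradicting optimality of $A$. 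This reduces the whole lemma to a finite residue check against an explicit formula, which is the safest way to close the argument.
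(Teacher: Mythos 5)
Your final plan is essentially the paper's own argument: both proofs observe that if $x_1,x_n\in A$ then $N_{P_n}(\{x_1\})=\{x_2\}$ and $N_{P_n}(\{x_n\})=\{x_{n-1}\}$ cover only the four outermost edges, so the rest of $A$ must handle a copy of $P_{n-4}$ and hence $|A|\ge v(P_{n-4})+2$, while $v(P_n)=v(P_{n-4})+1$ gives the contradiction. The only real difference is how that last equality is obtained --- you read $v(P_n)\le v(P_{n-4})+1$ off the closed formula of Theorem \ref{a} by comparing residues mod $4$ (legitimate, since Theorem \ref{a} is proved independently of this lemma), whereas the paper constructs an explicit stable set of size $v(P_{n-4})+1$ by shifting a corresponding stable set of $P_{n-4}$ and adjoining one vertex --- and the deletion/exchange attempt in the middle of your proposal is a dead end that you rightly abandon in favor of this counting argument.
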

\begin{proof}
Let if possible, $A$ be a corresponding stable set of $v(P_n)$ containing two endpoints $x_1$ and $x_n$. 
   \begin{center}
   \begin{tikzpicture}[scale=0.5]
    \Vertex [x=1,label=$x_1$,position=below,size=0.1,color=black]{A}
    \Vertex [x=3,label=$x_2$,position=below,size=0.1,color=black]{B}
\Edge (A)(B)
\Vertex [x=5,label=$x_3$,position=below,size=0.1,color=black]{C}
\Edge (C)(B)
\Vertex [x=7,label=$x_4$,position=below,size=0.1,color=black]{D}
\Edge (C)(D)
\Vertex [x=8,size=0.05,color=black]{E}

\Vertex [x=9,size=0.05,color=black]{F}
\Vertex [x=10,size=0.05,color=black]{G}

\Vertex [x=11,label=$x_{n-1}$,position=below,size=0.1,color=black]{H}
    \Vertex [x=13,label=$x_{n}$,position=below,size=0.1,color=black]{I}
\Edge (H)(I)

\end{tikzpicture}
\end{center}
Note that $N_{P_n}(\{x_1\})=\{x_2\}$ and $N_{P_n}(\{x_{n}\})=\{x_{n-1}\}$ and each of which covers two edges. Then we are left with $P_{n-4}$, where $V(P_{n-4})=\{x_3,x_4,\ldots,x_{n-2}\}$.  By Theorem \ref{graph}, we have $|A|\geq 2+v(P_{n-4})$. Let $v(P_{n-4})=|A'|$, where $A'\subset \{x_3,\ldots,x_{n-2}\}$ is a corresponding stable set of $v(P_{n-4})$. Let $A''=\{x_{i+2}\in V(P_n)~|~x_i\in A'\}$, then clearly, $|A''|=|A'|=v(P_{n-4})$. Also $\{x_3\}\cup A''$ is a stable set and $N_{P_n}(A''\cup\{x_3\})$ is a vertex cover of $P_n$. Thus, $v(P_n)\leq 1+v(P_{n-4})<|A|$, which is a contradiction. Therefore, $A$ can not contain two endpoints together.
\end{proof}
\begin{proposition} \label{c}

Let $P_n$ be a path and $C_n$ be a cycle. Then for $n\geq 5$
\begin{center}

    $v(C_n)=v(P_{n-3})+1$.
    \end{center}
\end{proposition}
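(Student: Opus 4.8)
The plan is to argue entirely through the combinatorial description of Theorem~\ref{graph}, so that $v(C_n)=\min\{|A|:A\in\mathcal A_{C_n}\}$ and $v(P_{n-3})=\min\{|A|:A\in\mathcal A_{P_{n-3}}\}$. Label the vertices of $C_n$ cyclically as $x_1,\dots,x_n$ (with $\{x_n,x_1\}$ an edge), and regard $P_{n-3}$ as the subpath induced on $\{x_3,x_4,\dots,x_{n-1}\}$. The structural fact driving everything is that as soon as a stable set $A\subseteq V(C_n)$ contains a vertex --- which after a rotation we may call $x_1$ --- its neighbour set is forced to contain $x_2$ and $x_n$, and these two vertices already cover the four edges $\{x_1,x_2\},\{x_2,x_3\},\{x_{n-1},x_n\},\{x_n,x_1\}$; the edges of $C_n$ not incident to $\{x_1,x_2,x_n\}$ are exactly the edges of $P_{n-3}$. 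So I expect the members of $\mathcal A_{C_n}$ containing $x_1$ to be precisely the sets $\{x_1\}\cup A'$ with $A'\in\mathcal A_{P_{n-3}}$, and the proposition will follow by minimizing $|A|$.

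For the inequality $v(C_n)\le v(P_{n-3})+1$ I would start from a corresponding stable set $B\subseteq\{x_3,\dots,x_{n-1}\}$ of $v(P_{n-3})$; by Lemma~\ref{b} the set $B$ omits at least one endpoint of this subpath, and by the reflection symmetry of $C_n$ we may assume $x_3\notin B$. Put $A=B\cup\{x_1\}$. The neighbours of $x_1$ in $C_n$ are $x_2,x_n\notin\{x_3,\dots,x_{n-1}\}$, so $A$ is stable and $N_{C_n}(A)=N_{P_{n-3}}(B)\cup\{x_2,x_n\}$. This is a vertex cover of $C_n$: the four junction edges are covered by $x_2,x_n$, and the remaining edges are edges of $P_{n-3}$ covered by $N_{P_{n-3}}(B)$. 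It is minimal: since $x_2,x_n\notin A$ and no vertex of $A$ is adjacent to $x_1$, we get $x_1\notin N_{C_n}(A)$, so $\{x_1,x_2\}$ and $\{x_1,x_n\}$ are covered only by $x_2$ and by $x_n$; and for $v\in N_{P_{n-3}}(B)$, an edge of $P_{n-3}$ witnessing the minimality of $v$ in the cover $N_{P_{n-3}}(B)$ of $P_{n-3}$ is not incident to $x_2$ or $x_n$, hence still witnesses minimality of $v$ in $N_{C_n}(A)$. Thus $A\in\mathcal A_{C_n}$ and $v(C_n)\le|A|=v(P_{n-3})+1$.

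For the reverse inequality I would take $A\in\mathcal A_{C_n}$ with $|A|=v(C_n)$. Since $\emptyset$ has empty neighbour set, which cannot be a vertex cover of the nonempty edge set of $C_n$, we have $A\neq\emptyset$, and after a rotation $x_1\in A$; stability forces $x_2,x_n\notin A$, so $A':=A\setminus\{x_1\}\subseteq V(P_{n-3})$ is stable in $P_{n-3}$. Since the neighbours of $x_1$ lie outside $\{x_3,\dots,x_{n-1}\}$ and adjacency inside $\{x_3,\dots,x_{n-1}\}$ agrees in $C_n$ and in $P_{n-3}$, one has $N_{P_{n-3}}(A')=N_{C_n}(A)\cap\{x_3,\dots,x_{n-1}\}$. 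This is a vertex cover of $P_{n-3}$, as each edge of $P_{n-3}$ is an edge of $C_n$ covered by $N_{C_n}(A)$ with both endpoints in $\{x_3,\dots,x_{n-1}\}$. It is also minimal: by minimality of $N_{C_n}(A)$, for $v\in N_{P_{n-3}}(A')$ there is an edge $e$ of $C_n$ covered only by $v$ within $N_{C_n}(A)$; $e$ cannot be $\{x_2,x_3\}$ or $\{x_{n-1},x_n\}$ (those are also covered by $x_2,x_n$), so $e$ is an edge of $P_{n-3}$ and it witnesses the minimality of $v$ in $N_{P_{n-3}}(A')$. Hence $A'\in\mathcal A_{P_{n-3}}$ and $v(P_{n-3})\le|A'|=v(C_n)-1$. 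The two inequalities give $v(C_n)=v(P_{n-3})+1$.

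The main obstacle I anticipate is not the vertex-cover property but the \emph{minimality} of the covers under this gluing/restriction. Two checks need care: (i) the junction vertices $x_2,x_n$ are indispensable in $N_{C_n}(A)$, which reduces to $x_1\notin N_{C_n}(A)$ so that $\{x_1,x_2\}$ and $\{x_1,x_n\}$ are ``private'' edges of $x_2$ and $x_n$; and (ii) adjoining $x_2,x_n$ does not make any vertex of $N_{P_{n-3}}(B)$ redundant, for which one uses that $x_2,x_n$ cover only edges incident to $x_1,x_3,x_{n-1}$ and no interior edge of $P_{n-3}$. Lemma~\ref{b} enters precisely to streamline the endpoint bookkeeping --- so that one need not separately treat the case where an endpoint of the subpath lies in the corresponding stable set --- while the hypothesis $n\ge 5$ ensures $P_{n-3}$ actually has an edge, so that $v(P_{n-3})$ is meaningful.
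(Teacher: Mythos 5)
Your proposal is correct, and the inequality $v(C_n)\le v(P_{n-3})+1$ is obtained exactly as in the paper: augment an optimal stable set of the path by one vertex of the cycle whose neighbours cover the four ``junction'' edges, then verify stability and minimality of the resulting cover. (As a side remark, your use of Lemma~\ref{b} and the reflection to arrange $x_3\notin B$ is not actually needed for this direction: $x_1$ is adjacent only to $x_2$ and $x_n$, so $B\cup\{x_1\}$ is stable and the cover is $N_{P_{n-3}}(B)\cup\{x_2,x_n\}$ no matter which endpoints $B$ contains.)

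Where you genuinely diverge is the reverse inequality. The paper argues from an optimal stable set $A$ of $P_{n-3}$: by Lemma~\ref{b} it misses an endpoint, hence $N_{C_n}(A)$ fails to cover the cycle, and the paper concludes $v(P_{n-3})<v(C_n)$. As written this only shows that this particular $A$ is not in $\mathcal A_{C_n}$, not that every member of $\mathcal A_{C_n}$ has size exceeding $v(P_{n-3})$. You instead start from an optimal $A\in\mathcal A_{C_n}$, rotate so that $x_1\in A$, delete $x_1$, and check that $A\setminus\{x_1\}\in\mathcal A_{P_{n-3}}$ --- including the minimality of the restricted cover, using that $x_2,x_n\in N_{C_n}(A)$ absorb all private edges outside $P_{n-3}$. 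This is the logically complete way to get $v(P_{n-3})\le v(C_n)-1$, and it is the more robust of the two arguments; the paper's version buys brevity at the cost of leaving that implication to the reader.
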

\begin{proof}
    Let for $n\geq 5,~V(C_n)=V(P_n)=\{x_1,x_2,\ldots,x_n\}$ and $V(P_{n-3})=\{x_1,x_2,\ldots,x_{n-3}\}$.
    \begin{center}
   \begin{tikzpicture}[scale=0.5]
    \Vertex [x=1,y=2,label=$x_1$,position=above,size=0.1,color=black]{A}
    \Vertex [x=3,y=2,label=$x_2$,position=below,size=0.1,color=black]{B}
\Edge (A)(B)
\Vertex [x=5,y=2,label=$x_3$,position=below,size=0.1,color=black]{C}
\Edge (C)(B)
\Vertex [x=7,y=2,label=$x_4$,position=below,size=0.1,color=black]{D}
\Edge (C)(D)
\Vertex [x=8,y=2,size=0.05,color=black]{E}

\Vertex [x=9,y=2,size=0.05,color=black]{F}
\Vertex [x=10,y=2,size=0.05,color=black]{G}

\Vertex [x=11,y=2,label=$x_{n-4}$,position=below,size=0.1,color=black]{H}
    \Vertex [x=13,y=2,label=$x_{n-3}$,position=above,size=0.1,color=black]{I}
\Edge (H)(I)

 \Vertex [x=1,y=1,label=$x_n$,position=below,size=0.1,color=black]{J}
 \Edge (A)(J)
 \Vertex [x=13,y=1,label=$x_{n-2}$,position=below,size=0.1,color=black]{K}
 \Edge (I)(K)
 \Vertex [x=7.5,y=0,label=$x_{n-1}$,position=below,size=0.1,color=black]{L}
 \Edge (L)(K)
 \Edge (J)(L)
\end{tikzpicture}
\end{center}
Suppose $v(P_{n-3})=|A|$, where $A$ is a corresponding stable set.
By Lemma \ref{b}, $A$ can not contain the two endpoints $x_1$ and $x_{n-3}$ together. Therfore, $v(P_{n-3})<v(C_n)$.
Note that $A\cup \{x_{n-1}\}$ is a stable set of $C_n$ and its neighbor set is a vertex cover of $C_n$. Therefore, $v(C_n)\leq v(P_{n-3})+1$. Thus, $v(C_n)=v(P_{n-3})+1$ for all $n\geq 5$, with a corresponding stable set  $ A\cup\{x_{n-1}\}$.
\end{proof}
From \cite[Proposition 3.9]{Saha}, we get the additivity of $v$-number holds for variable disjoint monomial ideals. If ideals are not variable disjoint, then by the following examples we see that additivity is not true.
\begin{example}\label{exam1}
    Let $I_1=\langle x_1x_2,x_2x_3,x_3x_4,x_2x_5,x_5x_6\rangle\subset K[x_1,\ldots,x_6]$ and $I_2=\langle x_1x_7,x_7x_8,\\x_1x_9,x_9x_{10}\rangle\subset K[x_1,x_7,x_8,x_9,x_{10}]$ be two square-free monomial ideals, which are edge ideal of graph $G_1$ and $G_2$. Then $$I_1+I_2 =\langle x_1x_2,x_2x_3,x_3x_4,x_2x_5,x_5x_6,x_1x_7,x_7x_8,x_1x_9,x_9x_{10}\rangle\subset K[x_1,\ldots,x_{10}],$$ which corresponds the edge ideal 1-clique sum of $G_1$ and $G_2$. Here $v(I_1)=1,v(I_2)=1$ and $v(I_1+I_2)=3$, which gives $v(I_1)+v(I_2)<v(I_1+I_2)$. 
\end{example}
\begin{example}\label{exam2}
Let $I_1=\langle x_1x_2,x_2x_3 \rangle\subset K[x_1,x_2,x_3]$ and $I_2=\langle x_3x_4,x_4x_5\rangle\subset K[x_3,x_4,x_5]$ be two square-free monomial ideals, which are edge ideals of graph $G_1$ and $G_2$. Then $$I_1+I_2=\langle x_1x_2,x_2x_3,x_3x_4,x_4x_5\rangle\subset K[x_1,\ldots,x_5],$$ which corresponds the edge ideal 1-clique sum of $G_1$ and $G_2$. Here $v(I_1)=1,v(I_2)=1$ and $v(I_1+I_2)=1$, which gives $v(I_1)+v(I_2)>v(I_1+I_2)$. 
\end{example}
From the above examples, we can say that neither of the two inequality is true in general. Next we discuss the $v$-number of $1$-clique sum of a cycle and a path.
\begin{proposition} \label{prativa}
Let H be a $1$-clique sum of a cycle $C_n$ and a path $P_m$. Then 
\begin{center}

    $v(C_n)+v(P_{m-2})-1\leq v(H)\leq v(C_n)+v(P_{m-2})$. 
\end{center}
\end{proposition}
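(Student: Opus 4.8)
The plan is to set up coordinates so that the clique-sum vertex is shared, then bound $v(H)$ from above by an explicit construction and from below by a counting argument, mirroring the style of Proposition~\ref{c} and Lemma~\ref{b}. Concretely, write $V(C_n)=\{x_1,\dots,x_n\}$ with the cycle edges $x_1x_2,\dots,x_{n-1}x_n,x_nx_1$, and attach the path $P_m$ along the vertex $x_1$, so that $V(P_m)=\{x_1,y_2,\dots,y_m\}$ with path edges $x_1y_2,y_2y_3,\dots,y_{m-1}y_m$. Then $H=C_n\cup P_m$ with the only shared vertex $x_1$, and $V(H)=V(C_n)\cup\{y_2,\dots,y_m\}$. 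The sub-path of $H$ on $\{y_2,\dots,y_m\}$ is a copy of $P_{m-1}$, and the ``useful'' portion after deleting the neighbourhood of an endpoint is $P_{m-2}$, which explains the shape of the bound.

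For the upper bound $v(H)\le v(C_n)+v(P_{m-2})$, I would take a corresponding stable set $A$ of $v(C_n)$ inside $V(C_n)$ (by Theorem~\ref{graph}, $|A|=v(C_n)$ and $N_{C_n}(A)$ is a minimal vertex cover of $C_n$), and a corresponding stable set $B'$ of $v(P_{m-2})$ sitting on the ``interior'' part $\{y_3,\dots,y_m\}$ of the attached path (shifting indices as in Lemma~\ref{b}); one must check that $A$ can be chosen so that $x_1\notin N_{C_n}[A]$ or, if not, absorb the single edge $x_1y_2$ by the choice of $B'$. Then $A\cup B'$ is a stable set of $H$, and $N_H(A\cup B')$ covers all edges of $C_n$ (via $N_{C_n}(A)$) and all edges of the path (via the neighbour set of $B'$ together with $y_2$, which is covered since $y_2\in N_H(B')$ once $B'$ contains $y_3$). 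Minimality of the resulting vertex cover, or at least containment of a minimal one with the same $v$-value, then gives the bound via Theorem~\ref{graph}. One likely needs the remark after Theorem~\ref{a} providing alternative corresponding stable sets of paths, to have the freedom to avoid $x_1$.

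For the lower bound $v(H)\ge v(C_n)+v(P_{m-2})-1$, let $D\in\A_H$ with $|D|=v(H)$, and split $D=D_C\sqcup D_P$ according to whether a vertex lies in $V(C_n)$ or in $\{y_2,\dots,y_m\}$ (putting $x_1$ in $D_C$ if it occurs). Since $N_H(D)$ is a vertex cover of $H$, in particular it covers every edge of $C_n$; the vertices of $D_P$ other than possibly $y_2$ have no neighbour in $C_n$, so $N_H(D_C)\cup(\{x_2,x_n\}\cap N_H(D_P))$ must cover $E(C_n)$, which forces $|D_C|\ge v(C_n)-1$ after accounting for the at most one extra edge at $x_1$ that $y_2\in D_P$ could help cover. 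Symmetrically, $N_H(D)$ covers every edge of the path $x_1y_2,\dots,y_{m-1}y_m$; deleting the edge $x_1y_2$ (covered possibly ``for free'' by something in $D_C$) leaves a $P_{m-2}$ on $\{y_3,\dots,y_m\}$ that must be covered by $N_{P_{m-2}}(D_P)$, giving $|D_P|\ge v(P_{m-2})$. Adding the two inequalities yields $v(H)=|D_C|+|D_P|\ge v(C_n)+v(P_{m-2})-1$.

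The main obstacle I anticipate is the careful bookkeeping at the clique vertex $x_1$: both the cycle edges incident to $x_1$ and the path edge $x_1y_2$ can be ``shared'' between the two pieces, so one has to argue precisely why at most one unit is lost in the lower bound and why the $-1$ cannot be improved in general (presumably Examples~\ref{exam1} and~\ref{exam2} of the clique-sum phenomenon, or a small worked instance, show both the $v(C_n)+v(P_{m-2})$ and the $v(C_n)+v(P_{m-2})-1$ cases actually occur). A secondary technical point is ensuring that the stable set constructed for the upper bound really has $N_H(A\cup B')$ equal to (or containing) a \emph{minimal} vertex cover of $H$, not merely a vertex cover; this is where invoking Theorem~\ref{graph} in the form ``$v(I)=\min\{|A|:A\in\A_{\mathcal G}\}$'' together with the fact that any vertex cover contains a minimal one of no larger size lets one pass from a covering stable set to an element of $\A_H$ without increasing cardinality.
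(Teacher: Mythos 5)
Your overall strategy is the same as the paper's: an explicit gluing construction at the clique vertex for the upper bound, and a split of a minimal witness $D$ at that vertex, losing at most one unit, for the lower bound. The one place where your sketch would actually break is the upper bound. As written, both branches of your plan end up relying on covering the junction edge $x_1y_2$ through $B'$, i.e.\ on choosing a corresponding stable set of $v(P_{m-2})$ on $\{y_3,\dots,y_m\}$ that contains the endpoint $y_3$. That is not always possible: for $m-2\equiv 0,1\pmod{4}$ no corresponding stable set of a path contains an endpoint (already for $P_4$ the only singletons whose neighbourhoods cover all edges are the two middle vertices), and your alternative condition $x_1\notin N_{C_n}[A]$ makes matters worse rather than better, since it guarantees that neither $x_1$ nor $y_2$ lies in $N_H(A)$, leaving $x_1y_2$ uncovered by the cycle side. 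The condition you want is the opposite one, and it can always be arranged: since $C_n$ is vertex-transitive, rotate the labelling so that $x_1\in A$. Then $y_2\in N_H(A)$ covers both $x_1y_2$ and $y_2y_3$, and $B'$ can be any corresponding stable set of $P_{m-2}$ on $\{y_3,\dots,y_m\}$, with no endpoint requirement. This is exactly how the paper proceeds (``without loss of generality, let $x_1\in A$'').

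Your lower bound is correct in outline, but the inequality $|D_P|\ge v(P_{m-2})$ needs more care than you allow: $D_P$ may contain $y_2\notin V(P_{m-2})$, and its neighbourhood need not be a minimal vertex cover of $P_{m-2}$, so Theorem \ref{graph} does not apply directly. The paper sidesteps this by comparing against $v(P_m)$ instead: in each case ($x_1\in D$; $x_1\notin D$ with the induced neighbourhood failing to cover $C_n$, failing to cover $P_m$, or covering both), adding $x_1$ to at most one of the two pieces produces stable sets witnessing $|D|+1\ge v(C_n)+v(P_m)$, and then $v(P_m)\ge v(P_{m-2})$, which follows from the formula of Theorem \ref{a}, finishes the argument. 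Either route can be made to work, but if you keep your version you must justify the passage from ``$N_H(D_P)$ covers the edges of $P_{m-2}$'' to ``$|D_P|\ge v(P_{m-2})$'' in the case $y_2\in D_P$.
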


\begin{proof}
Let $V(C_n)=\{x_1,\ldots,x_n\}$, $V(P_m)=\{x_1,t_2,\ldots,t_m\}$ and $H$ be $1$-clique sum of $C_n$ and $P_m$, joined at the vertex $x_1$. Suppose $v(C_n)=s$ and A is a corresponding stable set of $v(C_n)$ i.e, $v(C_n)=|A|$.

\begin{center}

    \begin{tikzpicture}
         \Vertex [x=2,y=3,label=$x_2$,position=above,size=0.1,color=black]{A}
          \Vertex [x=1,y=2,label=$x_3$,position=above,size=0.1,color=black]{B}
         \Edge(A)(B)

            \Vertex [x=3,y=2,label=$x_1$,position=above,size=0.1,color=black]{C}
            \Edge(A)(C)
             
             \Vertex [x=3,y=1,label=$x_n$,position=right,size=0.1,color=black]{D}
             \Edge(C)(D)
             \Vertex [x=1,y=1.5,position=left,size=0.05,color=black]{E}
             
             \Vertex [x=1,y=1,position=left,size=0.05,color=black]{F}
             \Vertex [x=1.5,y=1,position=left,size=0.05,color=black]{F}
              \Vertex [x=2,y=1,position=left,size=0.05,color=black]{F}
              \Vertex [x=2.5,y=1,position=left,size=0.05,color=black]{F}
               \Vertex [x=4,y=3,label=$t_2$,position=above,size=0.1,color=black]{G}
               \Vertex [x=5,y=2,label=$t_3$,position=above,size=0.1,color=black]{H}
           \Edge(G)(C)
           \Edge(G)(H)
           \Vertex [x=6,y=3,label=$t_4$,position=above,size=0.1,color=black]{I}
           \Edge(H)(I)
           \Vertex [x=6.5,y=3,position=above,size=0.1,color=black]{G}
            \Vertex [x=7,y=3,position=above,size=0.1,color=black]{G}
            \Vertex [x=7.5,y=3,label=$t_{m-1}$,position=above,size=0.1,color=black]{I}
            \Vertex [x=8.5,y=2,label=$t_m$,position=above,size=0.1,color=black]{J}
            \Edge(I)(J)
    \end{tikzpicture}
    \end{center}
Without the loss of generality, let $x_1\in A$.
Clearly, $t_2\in N_H(A)$, so $N_H(A)$ is a vertex cover of the induced subgraph with vertex set $V(C_n)\cup \{t_2,t_3\}$. Then we need to cover the path $P_{m-2}$, where V($P_{m-2})=\{t_3,\ldots,t_{m-1},t_m\}$. Suppose B is a stable set corresponding to $v(P_{m-2})$, then $A\cup B$ is a stable set of $H$, such that $N_H(A\cup B)$ is a vertex cover of $H$. Thus, $v(H)\leq |A\cup B|=|A|+|B|=v(C_n)+v(P_{m-2})$. 
For the other inequality, let $D$ be a stable set corresponding to $v(H)$. Then we have the following two cases:
\\Case-I: If $x_1\in D$, then clearly $N_{C_n}(D\cap V(C_n))$ and $N_{P_m}(D\cap V(P_m))$ are vertex covers of $C_n$ and $P_m$ respectively. This gives $|D\cap V(C_n)|+|D\cap V(P_m)|\geq v(C_n)+v(P_m)\geq v(C_n)+v(P_{m-2})$. Thus, $|D|+1\geq v(C_n)+v(P_{m-2})$, which implies $v(H)\geq v(C_n)+v(P_{m-2})-1$.Let $N'_{C_n}(D)=\{x\in V(C_n)~|~\{x\}\cup D ~\mbox{contains an edge of} ~H\}$ and $N'_{P_m}(D)=\{x\in V(P_m)~|~\{x\}\cup D ~\mbox{contains an edge of} ~H\}$.
\\Case-II: If $x_1\notin D$, then we have the following:

\begin{enumerate}[(i)]
    \item If $N_{C_n}(D\cap V(C_n))$ is not a vertex cover of $C_n$, then $(N'_{C_n}(D)\cap V(C_n))\backslash N_{C_n}(D\cap V(C_n))=\{x_1\}$ and $(N'_{P_m}(D)\cap V(P_m))\backslash N_{P_m}(D\cap V(P_m))=\phi$. This implies $N_{C_n}(\{D\cap V(C_{n})\}\cup\{x_1\})$ is a vertex cover of $C_n$ and $N_{P_m}(D\cap V(P_m))$ is a vertex cover of $P_m$. Thus, $|\{D\cap V(C_{n})\}\cup\{x_1\}|+|D\cap V(P_m)|\geq v(C_n)+v(P_m)\geq v(C_n)+v(P_{m-2})$. Therefore, $v(H)=|D|\geq v(C_n)+v(P_{m-2})-1$.

    \item If $N_{P_m}(D\cap V(P_m))$ is not a vertex cover of $P_m$, then $(N'_{P_m}(D)\cap V(P_m))\backslash N_{P_m}(D\cap V(P_m))=\{x_1\}$ and $(N'_{C_n}(D)\cap V(C_n))\backslash N_{C_n}(D\cap V(C_n))=\phi$. Thus, $N_{P_m}(\{D\cap V(P_m)\}\cup\{x_1\})$ is a vertex cover of $P_m$ and $N_{C_n}(D\cap V(C_n))$ is a vertex cover of $C_n$, which implies $v(H)=|D|\geq v(C_n)+v(P_{m-2})-1$.

    \item If $N_{C_n}(D\cap V(C_n))$ and $N_{P_m}(D\cap V(P_m))$ are the vertex covers of $C_n$ and $P_m$ respectively, then $|D|\geq v(C_n)+v(P_m)\geq v(C_n)+v(P_{m-2})$.
\end{enumerate} 
\end{proof}
Consequently, as a Corollary, we prove that the lower bound is achieved in the above Proposition for some specific values of $n$ and $m$.
\begin{corollary}\label{csk}[With the hypothesis as in \ref{prativa}]
    If $n\equiv 1,2\Mod 4$ and $m\equiv 0  \Mod 4$, then 
  $v(H)= v(C_n)+v(P_{m-2})-1.$   

\end{corollary}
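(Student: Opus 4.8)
The plan is to pair the inequality $v(H)\ge v(C_n)+v(P_{m-2})-1$ from Proposition \ref{prativa} with a matching upper bound obtained by writing down one explicit stable set of $H$ that realizes it. I keep the set-up of Proposition \ref{prativa}: $V(C_n)=\{x_1,\dots,x_n\}$, $V(P_m)=\{x_1,t_2,\dots,t_m\}$, and $H$ is the $1$-clique sum glued at $x_1$. The crucial point is that a vertex of $H$ adjacent to the gluing vertex $x_1$ is unusually efficient: for $t_2$ we have $N_H(\{t_2\})=\{x_1,t_3\}$, so the single vertex $t_2$ already covers the three edges incident to $x_1$, namely $x_1x_2$, $x_1x_n$, $x_1t_2$, together with $t_2t_3$ and $t_3t_4$. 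What then remains to be covered splits into two vertex-disjoint paths meeting neither $t_2$ nor $N_H(\{t_2\})$: the path $P_{n-1}$ induced on $\{x_2,\dots,x_n\}$ and the path $P_{m-3}$ induced on $\{t_4,\dots,t_m\}$.

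Accordingly, I would choose a stable set $B_1$ of $P_{n-1}=x_2-x_3-\cdots-x_n$ corresponding to $v(P_{n-1})$ and a stable set $B_2$ of $P_{m-3}=t_4-t_5-\cdots-t_m$ corresponding to $v(P_{m-3})$, and set $D=\{t_2\}\cup B_1\cup B_2$. First I would check that $D$ is a stable set of $H$: $B_1$ and $B_2$ are stable in the induced subpaths, hence in $H$; the only neighbours of $t_2$ are $x_1$ and $t_3$, which are not in $B_1\cup B_2$; and $H$ has no edge between $\{x_2,\dots,x_n\}$ and $\{t_4,\dots,t_m\}$, since every such edge would pass through $x_1$. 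Next I would check that $N_H(D)$ is a vertex cover of $H$: the edges $x_1x_2$, $x_1x_n$, $x_1t_2$ are covered because $x_1\in N_H(\{t_2\})$; the edges $t_2t_3$ and $t_3t_4$ are covered because $t_3\in N_H(\{t_2\})$; the edges $x_2x_3,\dots,x_{n-1}x_n$ are covered because $N_{P_{n-1}}(B_1)\subseteq N_H(B_1)$ is a vertex cover of $P_{n-1}$; and the edges $t_4t_5,\dots,t_{m-1}t_m$ are covered because $N_{P_{m-3}}(B_2)\subseteq N_H(B_2)$ is a vertex cover of $P_{m-3}$. By Theorem \ref{graph} this gives $v(H)\le|D|=1+v(P_{n-1})+v(P_{m-3})$.

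It remains to evaluate $v(P_{n-1})$ and $v(P_{m-3})$ by Theorem \ref{a}, and here the hypotheses enter. Since $n\equiv 1,2\Mod 4$ we get $n-1\equiv 0,1\Mod 4$, hence $v(P_{n-1})=[\frac{n-1}{4}]=[\frac{n}{4}]$, while $n-3\equiv 2,3\Mod 4$ gives $v(P_{n-3})=[\frac{n-3}{4}]+1=[\frac{n}{4}]$, so by Proposition \ref{c}, $v(C_n)=v(P_{n-3})+1=[\frac{n}{4}]+1$; thus $v(P_{n-1})=v(C_n)-1$. Since $m\equiv 0\Mod 4$ we get $m-2\equiv 2\Mod 4$ and $m-3\equiv 1\Mod 4$, hence $v(P_{m-2})=[\frac{m-2}{4}]+1=[\frac{m}{4}]$ and $v(P_{m-3})=[\frac{m-3}{4}]=[\frac{m}{4}]-1$, so $v(P_{m-3})=v(P_{m-2})-1$. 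Substituting, $|D|=1+(v(C_n)-1)+(v(P_{m-2})-1)=v(C_n)+v(P_{m-2})-1$, which together with Proposition \ref{prativa} proves $v(H)=v(C_n)+v(P_{m-2})-1$. The two $v$-number computations for short paths are routine bookkeeping once the residues are tracked; the one genuinely substantive choice is the hinge vertex $t_2$ and the induced decomposition of the uncovered part of $H$ into $P_{n-1}$ and $P_{m-3}$, and the main obstacle is recognizing that it is precisely the congruences $n\equiv 1,2$ and $m\equiv 0\pmod 4$ that force $v(P_{n-1})=v(C_n)-1$ and $v(P_{m-3})=v(P_{m-2})-1$ \emph{simultaneously}, so that exactly one vertex is saved relative to the upper-bound construction in Proposition \ref{prativa}. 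I would also be careful that the inclusion $N_{P}(B)\subseteq N_H(B)$ runs in the stated direction when transferring the vertex-cover property from an induced subpath to $H$, and that adjoining $t_2$ to $B_1\cup B_2$ does not destroy stability.
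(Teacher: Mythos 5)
Your proof is correct, and it establishes the upper bound by a construction that is organized differently from the paper's. The paper takes an optimal stable set $B$ of $C_n$ containing the gluing vertex $x_1$ and an optimal stable set $A$ of $P_m$ containing $t_2$, and simply asserts that $(A\cup B)\setminus\{x_1\}$ still works, using $v(P_m)=v(P_{m-2})$ for $m\equiv 0\Mod 4$ to land on the stated bound; the burden there is the unverified claim that deleting $x_1$ from $B$ does not break the vertex-cover property (it holds because $t_2\in A$ puts $x_1$ into the neighbourhood, and the particular shape of $B$ for $n\equiv 1,2\Mod 4$ already covers $x_2x_3$ and $x_{n-1}x_n$ without help from $N(x_1)$). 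You instead fix $t_2$ as a hinge, observe that $H\setminus N_H[\{t_2\}]$ splits into the two vertex-disjoint paths on $\{x_2,\dots,x_n\}$ and $\{t_4,\dots,t_m\}$, and then let Theorem \ref{a} and Proposition \ref{c} do the counting: the congruences enter exactly through $v(P_{n-1})=v(C_n)-1$ and $v(P_{m-3})=v(P_{m-2})-1$. The two routes produce essentially the same stable set, but yours makes the verification that the neighbourhood is a vertex cover completely mechanical (each residual path is handled by an off-the-shelf optimal stable set, and stability of the union is clear since the pieces are separated by $N_H[\{t_2\}]$), at the cost of the extra residue bookkeeping for $v(P_{n-1})$ and $v(P_{m-3})$, which you carry out correctly. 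Both arguments then close by citing the lower bound of Proposition \ref{prativa}. One cosmetic remark: when $m=4$ your second path degenerates to the single vertex $t_4$ with $v(P_1)=0$, which is consistent with your formulas but worth a word.
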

\begin{proof}
Let $V(C_n)=\{x_1,\ldots,x_n\}$, $V(P_m)=\{x_1,t_2,\ldots,t_m\}$ and $H$ be $1$-clique sum of $C_n$ and $P_m$, joined at the vertex $x_1$. By Theorem \ref{a}, $A=\{t_2,\ldots,t_{m-2}\}$ is a stable set corresponding to $v(P_m)$ for $m\equiv 0\Mod 4$. By Proposition \ref{c} and Theorem \ref{a}, for $n\equiv 1 \Mod 4$, we have $B=\{x_1\}\cup \{x_4,\ldots,x_{n-1}\}$. Also, for $n\equiv 2 \Mod 4$, we have $B=\{x_1\}\cup\{x_4,\ldots,x_{n-2}\}$, the  stable set corresponding to $v(C_n)$. Clearly, $(A\cup B)\backslash\{x_1\}$ is a stable set and its neighbor set is a vertex cover of $H$. Therefore, $v(H)\leq |(A\cup B)\backslash\{x_1\}|=v(C_n)+v(P_m)-1=v(C_n)+v(P_{m-2})-1.$ 
\end{proof}
In the next Proposition, we give the formula for the $v$-number of $1$-clique sum of two cycles.
\begin{proposition}

    Let H be a $1$-clique sum of two cycles $C_m$ and $C_n$. Then
    \begin{center}
        $v(H)= v(C_n)+v(C_m)-1.$
    \end{center}
\end{proposition}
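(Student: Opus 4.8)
The plan is to mimic the structure of the proof of Proposition~\ref{prativa}, specializing the path $P_m$ to a second cycle $C_m$, and to use the extra symmetry to upgrade the two-sided estimate $v(C_n)+v(C_m)-1 \leq v(H) \leq v(C_n)+v(C_m)$ into an exact equality. Set $V(C_n)=\{x_1,\ldots,x_n\}$, $V(C_m)=\{x_1,y_2,\ldots,y_m\}$, and let $H$ be the $1$-clique sum glued at the common vertex $x_1$. First I would establish the upper bound $v(H)\le v(C_n)+v(C_m)-1$: take corresponding stable sets $A\in\A_{C_n}$ and $B\in\A_{C_m}$ of $v(C_n)$ and $v(C_m)$ respectively; the key observation is that by Proposition~\ref{c} (and the explicit stable sets recorded in Theorem~\ref{a} via $v(C_k)=v(P_{k-3})+1$) one may choose $A$ and $B$ so that $x_1\in A$ and $x_1\in B$. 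Then $(A\cup B)\setminus\{x_1\}$ is a stable set of $H$, and because $x_1$ already lies in $N_{C_n}(A\setminus\{x_1\})$ and in $N_{C_m}(B\setminus\{x_1\})$ (each of its two cycle-neighbours being covered by the rest of $A$, resp.\ $B$), the neighbour set $N_H((A\cup B)\setminus\{x_1\})$ is still a vertex cover of $H$. This gives $v(H)\le |A|+|B|-1 = v(C_n)+v(C_m)-1$.

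For the reverse inequality $v(H)\ge v(C_n)+v(C_m)-1$, I would argue exactly as in Case-I and Case-II of Proposition~\ref{prativa}, with $P_m$ replaced by $C_m$. Let $D\in\A_H$ with $v(H)=|D|$. If $x_1\in D$, then $N_{C_n}(D\cap V(C_n))$ and $N_{C_m}(D\cap V(C_m))$ are vertex covers of $C_n$ and $C_m$ respectively, so $|D\cap V(C_n)|+|D\cap V(C_m)|\ge v(C_n)+v(C_m)$, and since $x_1$ is counted in both terms, $|D|+1\ge v(C_n)+v(C_m)$. If $x_1\notin D$, one splits into the subcases according to whether $N_{C_n}(D\cap V(C_n))$ fails to be a vertex cover of $C_n$, or $N_{C_m}(D\cap V(C_m))$ fails to be a vertex cover of $C_m$, or neither fails; in each subcase the only vertex that can be "missing" from the relevant neighbour set is $x_1$, so adjoining $x_1$ to the appropriate part repairs it, and one lands on $|D|\ge v(C_n)+v(C_m)-1$ as before. (Note that both cannot fail simultaneously, since then $x_1$ would have to be covered on both sides by $D$ itself, contradicting that $D$ together with its $H$-neighbours is a minimal-vertex-cover situation — this is the same bookkeeping as in Proposition~\ref{prativa}.) Combining the two bounds yields $v(H)=v(C_n)+v(C_m)-1$.

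The main obstacle is the upper bound, specifically justifying that $v(C_n)$ and $v(C_m)$ are each realized by a stable set that contains the gluing vertex $x_1$ — i.e.\ that among the corresponding stable sets of a cycle one can always find one using any prescribed vertex. This follows from Proposition~\ref{c}: a corresponding stable set of $C_k$ has the form $A'\cup\{x_{k-1}\}$ where $A'$ realizes $v(P_{k-3})$ on the complementary path, and by the rotational symmetry of $C_k$ one is free to relabel so that the distinguished vertex is $x_1$; one must only check that the path $P_{k-3}$ sitting on the remaining $k-3$ vertices still admits a corresponding stable set avoiding its two endpoints (which are the cycle-neighbours of $x_1$), and this is exactly the content of Lemma~\ref{b}. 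The rest is the routine covering-count bookkeeping already carried out for the clique sum of a cycle and a path.
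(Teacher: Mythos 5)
Your lower-bound argument follows the paper's Case-I/Case-II bookkeeping and is fine, and your justification that $v(C_n)$ and $v(C_m)$ can each be realized by a corresponding stable set through any prescribed vertex (rotational symmetry plus Lemma \ref{b} via Proposition \ref{c}) is the right observation. The problem is the upper bound. You delete the gluing vertex and work with $(A\cup B)\setminus\{x_1\}$, but this set has cardinality $|A|+|B|-2=v(C_n)+v(C_m)-2$, not $v(C_n)+v(C_m)-1$, and its neighbour set is in general \emph{not} a vertex cover of $H$. Your stated reason --- that ``$x_1$ already lies in $N_{C_n}(A\setminus\{x_1\})$'' --- cannot hold: since $A$ is a stable set containing $x_1$, no vertex of $A$ is adjacent to $x_1$, hence $x_1\notin N_{C_n}(A\setminus\{x_1\})$. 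Concretely, glue two copies of $C_5$ at $x_1$ and take $A=\{x_1,x_3\}$, $B=\{x_1,t_3\}$; then $(A\cup B)\setminus\{x_1\}=\{x_3,t_3\}$ has neighbour set $\{x_2,x_4,t_2,t_4\}$, which misses the edges $x_1x_5$ and $x_1t_5$, whereas the true value is $v(H)=3=2+2-1$.

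The repair is simply to \emph{keep} $x_1$: the set $A\cup B$ itself is stable in $H$ (every edge of $H$ lies in one of the two cycles, and an edge of $C_m$ inside $A\cup B$ would have both endpoints in $A\cup(B\cap V(C_m))=A$, contradicting stability of $A$), it has cardinality $|A|+|B|-|A\cap B|=v(C_n)+v(C_m)-1$, and its neighbour set contains $N_H(\{x_1\})$, which covers the four edges at $x_1$, while $N_{C_m}(A)$ and $N_{C_n}(B)$ cover the remaining edges. This is exactly what the paper does. You appear to have imported the ``delete $x_1$'' device from Corollary \ref{csk}, but there only one of the two sets contains $x_1$ while the other contains a neighbour $t_2$ of $x_1$, which is precisely what keeps $x_1$ inside the neighbour set after the deletion; with two cycles and $x_1\in A\cap B$ that mechanism is unavailable.
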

\begin{proof}
Let $V(C_m)=\{x_1,x_2,\ldots,x_m\}$, $V(C_n)=\{x_1,t_2,\ldots,t_n\}$ and $H$ be $1$-clique sum of $C_m$ and $C_n$, joined at the vertex $x_{1}$.
\begin{center}

    \begin{tikzpicture}
\Vertex [x=2,y=3,label=$x_2$,position=above,size=0.1,color=black]{A}
\Vertex [x=1,y=2,label=$x_3$,position=above,size=0.1,color=black]{B}
\Edge(A)(B)
\Vertex [x=3,y=2,label=$x_1$,position=above,size=0.1,color=black]{C}
\Edge(A)(C)
\Vertex [x=2,y=1,label=$x_m$,position=above,size=0.1,color=black]{D}
\Edge(D)(C)
\Vertex [x=1.5,y=1.5,position=above,size=0.1,color=black]{E}
\Vertex [x=4,y=3,label=$t_2$,position=above,size=0.1,color=black]{F}
\Edge(C)(F)
\Vertex [x=4,y=1,label=$t_n$,position=above,size=0.1,color=black]{G}
\Edge(C)(G)
\Vertex [x=5,y=2,label=$t_3$,position=above,size=0.1,color=black]{H}
\Edge(H)(F)
\Vertex [x=4.5,y=1.5,position=above,size=0.1,color=black]{B}
    \end{tikzpicture}
    \end{center}
    Suppose $A$ and $B$ are stable sets corresponding to $v(C_m)$ and $v(C_n)$ respectively with $x_1\in A\cap B$. Clearly, $A\cup B$ is a stable set whose neighborhood is a vertex cover of $H$. Then $v(H)\leq |A\cup B|=|A|+|B|-|A\cap B|=v(C_m)+v(C_n)-1$.
     Let $D$ be a stable set corresponding to $v(H)$. Now, if $x_1\in D$, then $N_{C_m}(V(C_m)\cap D)$ and $N_{C_n}(V(C_n)\cap D)$ are vertex covers of $C_m$ and $C_n$ respectively. This implies $|D|+1= |D\cap V(C_n)|+|D\cap V(C_m)|\geq v(C_n)+v(C_m)$. Therefore, $v(H)=|D|\geq v(C_n)+v(C_m)-1$. Let $N'_{C_n}(D)=\{x\in V(C_n)~|~\{x\}\cup D ~\mbox{contains an edge of} ~H\}$ and $N'_{C_m}(D)=\{x\in V(C_m)~|~\{x\}\cup D ~\mbox{contains an edge of} ~H\}$. If $x_1\notin D$, then we have the following three cases:
     \begin{enumerate}[(i)]
         \item  If $N_{C_m}(V(C_m)\cap D)$ is not a vertex cover of $C_m$, then $\{N'_{C_m}(D)\cap V(C_m)\}\backslash N_{C_m}(D\cap V(C_m))=\{x_1\}$ and $N'_{C_n}(D)\cap V(C_n)=N_{C_n}(D\cap V(C_n))$. This implies $N_{C_m}(\{D\cap V(C_m)\}\cup \{x_1\})$ and $N_{C_n}(D\cap V(C_n))$ are vertex covers of $C_m$ and $C_n$ respectively. Therefore, $|D|+1\geq v(C_m)+v(C_n)$. Since $v(H)=|D|$, we have $v(H)\geq v(C_m)+v(C_n)-1$. Hence $v(H)=v(C_m)+v(C_n)-1.$ 
         \item Similarly, if $N_{C_n}(V(C_n)\cap D)$ is not vertex cover of $C_n$, then we get $v(H)\geq v(C_m)+v(C_n)-1$. Hence $v(H)=v(C_m)+v(C_n)-1.$ 
         \item If $N_{C_m}(V(C_m)\cap D)$ and $N_{C_n}(V(C_n)\cap D)$ are vertex covers of $C_m$ and $C_n$ respectively, then $|D|\geq v(C_m)+v(C_n)$ which contradicts the fact that $v(H)\leq v(C_m)+v(C_n)-1$. Therefore this case can not occure.
     \end{enumerate}
\end{proof}

We end this section with a Theorem that relates the $v$-number of join graph with the $v$-number of its individual graphs.
\begin{theorem}\label{jOIN}

Let $D=D_1*D_2$ be the join of two graphs $D_1$ and $D_2$. Then 
\begin{center}
    $v(D) = \min\{v(D_1),v(D_2)\}$.
\end{center}
\end{theorem}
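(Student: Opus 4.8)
The plan is to use the combinatorial characterization of the $v$-number from Theorem~\ref{graph}, namely $v(I(\mathcal{G}))=\min\{|A|:A\in\mathcal{A}_\mathcal{G}\}$, where $\mathcal{A}_\mathcal{G}$ is the collection of stable sets $A$ whose neighbor set $N_\mathcal{G}(A)$ is a minimal vertex cover of $\mathcal{G}$. I would first establish the inequality $v(D)\leq\min\{v(D_1),v(D_2)\}$. Suppose without loss of generality that $v(D_1)\leq v(D_2)$ and let $A\in\mathcal{A}_{D_1}$ with $|A|=v(D_1)$ be a corresponding stable set in $D_1$. Since $A\subseteq V(D_1)$ and $D_1$ is an induced subgraph of $D$, the set $A$ is still stable in $D$. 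The key observation is that in the join, every vertex of $D_2$ is adjacent to every vertex of $D_1$, so $N_D(A)\supseteq N_{D_1}(A)\cup V(D_2)$; in fact $N_D(A)=N_{D_1}(A)\cup V(D_2)$ (a vertex of $D_1$ outside $N_{D_1}(A)$ is not pulled in, since edges inside $D_1$ are unchanged). I then need to check that $N_{D_1}(A)\cup V(D_2)$ is a minimal vertex cover of $D$: it covers all edges of $D_1$ (as $N_{D_1}(A)$ is a vertex cover of $D_1$), all edges of $D_2$ (as $V(D_2)$ does), and all cross edges (as $V(D_2)$ does); minimality should follow because $N_{D_1}(A)$ is minimal in $D_1$ — removing a vertex of $V(D_2)$ exposes a cross edge to a vertex of $D_1\setminus N_{D_1}(A)$ (such a vertex exists because $D_1$ has at least one edge, hence $A\neq\emptyset$ and $N_{D_1}(A)\neq V(D_1)$), and removing a vertex $v\in N_{D_1}(A)$ exposes, inside $D_1$, the same edge it exposed there. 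Thus $A\in\mathcal{A}_D$ and $v(D)\leq|A|=v(D_1)=\min\{v(D_1),v(D_2)\}$.

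For the reverse inequality $v(D)\geq\min\{v(D_1),v(D_2)\}$, let $B\in\mathcal{A}_D$ be a stable set in $D$ with $|B|=v(D)$. Because every vertex of $D_1$ is joined to every vertex of $D_2$, a stable set of $D$ cannot contain vertices from both parts; hence $B\subseteq V(D_1)$ or $B\subseteq V(D_2)$. Assume $B\subseteq V(D_1)$ (the other case is symmetric). Then $N_D(B)=N_{D_1}(B)\cup V(D_2)$ as above, and since $N_D(B)$ is a (minimal) vertex cover of $D$, its restriction $N_D(B)\cap V(D_1)=N_{D_1}(B)$ must be a vertex cover of $D_1$ — indeed it covers every edge of $D_1$, for otherwise some edge of $D_1$ would be uncovered in $D$. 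I would then argue $N_{D_1}(B)$ is in fact a \emph{minimal} vertex cover of $D_1$ (if not, shrink it to a minimal one $C'\subseteq N_{D_1}(B)$; then $C'\cup V(D_2)$ is a vertex cover of $D$ strictly contained in $N_D(B)$, contradicting minimality of $N_D(B)$). Hence $B\in\mathcal{A}_{D_1}$, so $v(D)=|B|\geq v(D_1)\geq\min\{v(D_1),v(D_2)\}$; the symmetric case gives $v(D)\geq v(D_2)\geq\min\{v(D_1),v(D_2)\}$. Combining the two directions yields the claim.

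\textbf{Main obstacle.} The routine part is that stable sets of a join live entirely in one factor and that neighbor sets behave as the decomposition $N_D(A)=N_{D_1}(A)\cup V(D_2)$ predicts. The delicate point is the careful bookkeeping with \emph{minimality} of vertex covers: Theorem~\ref{graph} is phrased with $N_\mathcal{G}(A)$ required to be a \emph{minimal} vertex cover, so in both directions I must verify that the vertex covers I produce are actually minimal, not merely vertex covers. This requires using that $D_1$ (and $D_2$) have at least one edge — implicit in the definition of join being over ``graphs'' with edge sets — so that any corresponding stable set is nonempty and its neighbor set is a proper subset of the vertex set, which is exactly what lets one exhibit the ``exposed'' edge when a vertex is deleted. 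A secondary subtlety worth a sentence in the write-up is the degenerate situation where one of $D_1,D_2$ has an isolated vertex or the minimal vertex cover is the whole vertex set of a factor; one should confirm the argument still goes through (or note that such cases do not arise under the standing conventions).
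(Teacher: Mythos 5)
Your proof is correct and follows essentially the same route as the paper: the upper bound by lifting a corresponding stable set of one factor into the join, and the lower bound by observing that a stable set of $D_1*D_2$ must lie entirely in one factor. You are in fact more careful than the paper's own argument, which never verifies \emph{minimality} of the vertex cover $N_D(A)=N_{D_1}(A)\cup V(D_2)$ (and even writes $N_{D_1}(A)$ where $N_D(A)$ is meant); your explicit minimality checks are exactly what is needed to make the appeal to Theorem~\ref{graph} airtight.
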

\begin{proof}
Let $v(D_1)=|A|$, where $A$ is a stable set and $N_{D_1}(A)$ is a vertex cover of $D_1$.
    By definition \ref{joingraph},  $A$ is also a stable set and 
   $N_{D_1}(A)$ is a vertex cover of $D$.
Therefore, $v(D)\leq |A|= v(D_1)$. Let $v(D_2)=|A'|$, where $A'$ is a stable and $N_{D_2}(A')$ is a vertex cover of $D_2$. Then similarly we have $v(D)\leq |A'|= v(D_2)$. Thus, $v(D)\leq \min\{v(D_1), v(D_2)\}$.
Let $v(D)=|B|$, where $B$ is a stable set and $N_D(B)$ is a vertex cover of $D$. Clearly, $B\subset V(D_1)$ or $B\subset V(D_2)$, otherwise it will not be a stable set. Without the loss of generality, let $B\subset V(D_1)$ which implies $B$ is a stable set in $D_1$ and its neighborhood is a vertex cover of $D_1$, otherwise, it will contradict that $N_D(B)$ is vertex cover of $D$. Thus, $v(D_1)\leq |B|=v(D)$ which implies min$\{v(D_1), v(D_2)\}\leq v(D).$ Therefore, $v(D) = \min\{v(D_1),v(D_2)\}$.
    \end{proof}

    \section{\texorpdfstring{$v$}a-number of monomial ideal}
    
    In this section, we explicitly give a formula for the $v$-number of an $\mathfrak{m}$-primary monomial ideal. Moreover, we prove that $v(I^{n+1})$ is bounded above by a linear polynomial and this upper bound is achieved for certain classes of graphs.

 Let $\mathcal{M}$\label{M} denote the collection of all the $t\times t$ matrices $A=(a_{ij})$, such that the rows of $A$ are exponent vectors of monomials  constructed by choosing $t$ generators from $\mathscr{G}(I)$ satisfying the following conditions :
\\1) $a_{ii}>a_{li}$ for $l=1,\ldots,i-1,i+1,\ldots,t$ and $i=1,\ldots,t$.
\\2) $(a_{11}-1,\ldots,a_{tt}-1)\notin E(I)$.
\begin{theorem}\label{4.1}
    Let $I\subset S$ be an $\mathfrak{m}$-primary monomial ideal. Then
    \begin{center}
          $v(I)=\min\{tr(A)-t~|~A\in\mathcal{M}\}$,
    \end{center}
  where $\mathcal{M}$ is defined as above.
\end{theorem}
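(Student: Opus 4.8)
The plan is to strip the problem down to a statement about exponent vectors and then match those vectors with the rows of matrices in $\mathcal{M}$. Since $I$ is $\mathfrak{m}$-primary we have $\Ass(I)=\{\mathfrak m\}$, so $v(I)=v_{\mathfrak m}(I)$ and we only care about elements $f$ with $(I:f)=\mathfrak m$. First I would record the standard reduction to monomials: writing $f\in S_k$ as a sum of monomial terms, the monomial-ness of $I$ makes $x_if\in I$ split termwise, while $f\notin I$ forces some term $\mathbf{X}^{\mathbf a}$ to lie outside $I$; that single term then has $(I:\mathbf{X}^{\mathbf a})=\mathfrak m$ (a proper monomial ideal containing every $x_i$ equals $\mathfrak m$, using \cite[Proposition 1.2.2]{Hibi} for the colon) and degree $k$. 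This yields the working description
\[
v(I)=\min\bigl\{\,|\mathbf a| \;:\; \mathbf a\in\Z_{\geq 0}^t,\ \mathbf a\notin E(I),\ \mathbf a+\mathbf{e}_i\in E(I)\ \text{for } i=1,\dots,t \,\bigr\},
\]
where $|\mathbf a|=a_1+\cdots+a_t=\deg\mathbf{X}^{\mathbf a}$ and $\mathbf{e}_i$ is the $i$-th unit vector.

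Next I would prove $v(I)\le\operatorname{tr}(A)-t$ for every $A=(a_{ij})\in\mathcal{M}$. Put $\mathbf a:=(a_{11}-1,\dots,a_{tt}-1)$; this lies in $\Z_{\geq 0}^t$ (the diagonal entries are $\geq 1$: trivially if $t=1$, and by condition~(1) if $t\geq 2$) and satisfies $|\mathbf a|=\operatorname{tr}(A)-t$, while condition~(2) says precisely $\mathbf a\notin E(I)$. For each $i$, the $i$-th row of $A$ is the exponent vector of a generator $u^{(i)}\in\mathscr{G}(I)$, and it divides $\mathbf{X}^{\mathbf a+\mathbf{e}_i}$: in coordinate $i$ both exponents equal $a_{ii}$, and in coordinate $j\neq i$ condition~(1) gives $a_{ij}<a_{jj}$, i.e. $a_{ij}\le a_{jj}-1=(\mathbf a+\mathbf{e}_i)_j$. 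Hence $\mathbf a+\mathbf{e}_i\in E(I)$ for all $i$, so $(I:\mathbf{X}^{\mathbf a})=\mathfrak m$ and $v(I)\le|\mathbf a|=\operatorname{tr}(A)-t$.

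For the reverse inequality I would take a monomial $\mathbf{X}^{\mathbf a}$ of degree $v(I)$ realizing the displayed minimum and manufacture a matrix from it. For each $i$ choose a generator $u^{(i)}=\mathbf{X}^{\mathbf b^{(i)}}\in\mathscr{G}(I)$ with $\mathbf b^{(i)}\le\mathbf a+\mathbf{e}_i$ (possible since $\mathbf a+\mathbf{e}_i\in E(I)$). The \emph{key point} is that necessarily $b^{(i)}_i=a_i+1$: if $b^{(i)}_i\le a_i$ then $\mathbf b^{(i)}\le\mathbf a$, forcing $\mathbf{X}^{\mathbf a}\in I$, a contradiction. Let $A$ have $i$-th row $\mathbf b^{(i)}$. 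Then $a_{ii}=a_i+1$, while for $l\neq i$ we have $a_{li}=b^{(l)}_i\le(\mathbf a+\mathbf{e}_l)_i=a_i<a_{ii}$, so condition~(1) holds (and in particular the chosen generators are automatically distinct), and $(a_{11}-1,\dots,a_{tt}-1)=\mathbf a\notin E(I)$ is condition~(2). Thus $A\in\mathcal{M}$ with $\operatorname{tr}(A)-t=\sum_i(a_i+1)-t=|\mathbf a|=v(I)$, so the minimum over $\mathcal{M}$ is attained and equals $v(I)$ (this also shows $\mathcal{M}\neq\emptyset$).

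Every step is short; the one that needs care is the reverse direction — pinning down $b^{(i)}_i=a_i+1$ exactly and then checking that the off-diagonal comparisons of condition~(1) come out with the right strictness. This is precisely where the optimality of $\mathbf a$, i.e. $\mathbf{X}^{\mathbf a}\notin I$, is used, and the only real pitfall is confusing the bounds "$\le\mathbf a+\mathbf{e}_i$" and "$\le\mathbf a$" in the wrong coordinate.
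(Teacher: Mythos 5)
Your proof is correct and follows essentially the same route as the paper: translate $(I:\mathbf{X}^{\mathbf a})=\mathfrak m$ into the two conditions $\mathbf a\notin E(I)$ and $\mathbf a+\mathbf{e}_i\in E(I)$, and match such vectors with matrices in $\mathcal{M}$ in both directions. Your write-up is in fact more complete than the paper's, which asserts rather than verifies that the corresponding monomial of $v(I)$ produces a matrix in $\mathcal{M}$ — your observation that $b^{(i)}_i=a_i+1$ is forced is exactly the missing verification.
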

\begin{proof}
    Since $I$ is an $\mathfrak{m}$-primary ideal, there exists a monomial $\mathbf{X^b}=\mathbf{X}^{(b_1,\ldots,b_t)}$ such that $(I:\mathbf{X^b})=\mathfrak{m}$. By the definition of colon ideal of monomial ideals, $\mathscr G(I)$ contains at least $t$ elements of the form $g_i=\mathbf{X}^{(b_1-{u_{i1}},b_2-u_{i2},\ldots,b_t-u_{it})}$, where $u_{ij}>0$ for $i\neq j$ and $u_{ii}=-1$ for $i,j\in\{1,2,\ldots,t\}$. 
    Let $B$ be a matrix constructed from $g_1,\ldots,g_t$ such that $B={(E(g_1),\ldots,E(g_t))}^T$. Clearly, $B\in \mathcal{M}$ which implies $\mathcal{M}$ is non-empty.
    Let $N=\{tr(A)-t~|~A\in\mathcal{M}\}$ be a subset of
    $\mathbb{Z}_{\geq 0}$. Suppose $\mathbf{X^c}=\mathbf{X}^{(c_{11}-1,c_{22}-1,\ldots,c_{tt}-1)}$, where $c_{ii}-1\in\mathbb{Z}_{\geq 0}$ for $i=1,\ldots,n$, be a corresponding monomial of $v(I)$ i.e, $(I:\mathbf{X^c})=\mathfrak{m}$, which gives $v(I)=\displaystyle\sum_{i=1}^{t}c_{ii}-t\in N$.
    If $a\in N$, then by the construction of $N$, we have $a=\displaystyle\sum_{i=1}^{t}a_{ii}-t$, where $a_{ii}$ are the diagonal entries of some matrix $A=(a_{ij})\in \mathcal M$. Then $(I:\mathbf{X}^{(a_{11}-1,\ldots,a_{tt}-1)})=\mathfrak m$. Thus, $v(I)\leq a$, which implies $v(I)\leq \min N$. Therefore, $v(I)=\min\{tr(A)-t~|~A\in\mathcal{M}\}$.
\end{proof}
Consequently, as a Corollary, we recover a part of the result by Ficarra and Sgroi.
\begin{corollary}\label{two}\cite[Theorem 3.7]{antonino}
   Let ${I}=\langle x^{a_0},x^{a_1}y^{b_1},x^{a_2}y^{b_2},\cdot\cdot\cdot,x^{a_{n-1}}y^{b_{n-1}},y^{b_n}\rangle\subset K[x,y]$ be a $\mathfrak{m}$-primary monomial ideal with lexicographic ordering with $a_0>a_1>a_2>\ldots>a_{n-1}$ and $b_1<b_2<\ldots<b_n$, i.e. $(a_0,0)>(a_1,b_1)>(a_2,b_2)>\cdot\cdot\cdot>(a_{n-1},b_{n-1})>(0,b_n)$. Then $v(I)$ = $\displaystyle\min_{0\leq i\leq {n-1} }\{a_i+b_{i+1}-2\}$.
\end{corollary}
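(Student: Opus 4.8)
The plan is to specialize Theorem~\ref{4.1} to $t=2$, $S=K[x,y]$, and to read off the set $\mathcal M$ completely from the staircase shape of $\mathscr G(I)$. First I would fix notation: write $g_k=x^{a_k}y^{b_k}$ for $k=0,1,\dots,n$ with the conventions $b_0=0$ and $a_n=0$, so that $\mathscr G(I)=\{g_0,\dots,g_n\}$ with $a_0>a_1>\dots>a_n=0$ and $0=b_0<b_1<\dots<b_n$. A matrix $A\in\mathcal M$ is then a $2\times 2$ array whose two rows are the exponent vectors of two generators $g_i$ and $g_j$, subject to the two conditions defining $\mathcal M$. Condition~1 (column-wise diagonal dominance) forces the two chosen generators to be distinct, and, after ordering the rows so that the larger $x$-exponent sits on top, forces $i<j$ with first row $(a_i,b_i)$ and second row $(a_j,b_j)$: indeed, for two generators of this staircase, $a_i>a_j$ and $b_j>b_i$ hold precisely when $i<j$. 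In that case $tr(A)=a_i+b_j$, so the pair $\{i,j\}$ contributes the candidate value $a_i+b_j-2$.

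Next I would pin down which pairs $i<j$ survive Condition~2, that is, satisfy $(a_i-1,b_j-1)\notin E(I)$. Here I would use the elementary fact that $x^py^q\in I$ if and only if $g_k\mid x^py^q$ for some $k$, i.e.\ $a_k\le p$ and $b_k\le q$. Applied to $x^{a_i-1}y^{b_j-1}$, and using that $(a_k)$ is strictly decreasing while $(b_k)$ is strictly increasing, this gives $a_k\le a_i-1$ exactly for $k\ge i+1$, and $b_k\le b_j-1$ exactly for $k\le j-1$. Hence $x^{a_i-1}y^{b_j-1}\in I$ if and only if there is an integer $k$ with $i+1\le k\le j-1$, which is possible if and only if $j\ge i+2$. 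Consequently $(a_i-1,b_j-1)\notin E(I)$ if and only if $j=i+1$.

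Combining the two steps, $\mathcal M$ consists exactly of the matrices coming from consecutive pairs $\{i,i+1\}$ with $0\le i\le n-1$, each of trace $a_i+b_{i+1}$; one checks these are genuine elements of $\mathcal M$, since the diagonal entries $a_i-1$ and $b_{i+1}-1$ are nonnegative ($a_i\ge a_{n-1}\ge 1$ and $b_{i+1}\ge b_1\ge 1$), so in particular $\mathcal M\neq\emptyset$, consistent with Theorem~\ref{4.1}. Theorem~\ref{4.1} then yields $v(I)=\min\{tr(A)-2:A\in\mathcal M\}=\min_{0\le i\le n-1}\{a_i+b_{i+1}-2\}$, which is the claim. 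I expect the only (mild) obstacle to be the bookkeeping in the middle step — correctly translating Condition~2 into the equality $j=i+1$ — but this is immediate from the two-variable staircase structure, and everything else is just unwinding Theorem~\ref{4.1}.
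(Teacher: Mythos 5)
Your proposal is correct and is precisely the derivation the paper intends: the paper states the corollary as an immediate consequence of Theorem~\ref{4.1} without writing out the details, and your specialization to $t=2$ — identifying the matrices in $\mathcal M$ with consecutive pairs $(g_i,g_{i+1})$ of the staircase via Conditions 1 and 2 — fills in exactly those details correctly. The key bookkeeping step (Condition 2 forcing $j=i+1$ because $x^{a_i-1}y^{b_j-1}\in I$ iff some generator $g_k$ with $i+1\le k\le j-1$ divides it) is handled properly.
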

The next Proposition gives an upper bound for the $v$-number of an $\mathfrak{m}$-primary monomial ideal in terms of the degree of its generators.



\begin{proposition} \label{f}

    Let $I\subset S$ be an $\mathfrak{m}$-primary monomial ideal with $x_{i}^{a_i}\in \mathscr G(I)$ for $i=1,\ldots,t$ and $a_i\in \mathbb{Z}_{> 0}$. Then 
    \begin{center}
        $v(I)\leq \displaystyle\sum_{i=1}^{t}a_i-t$.
    \end{center}
    
\end{proposition}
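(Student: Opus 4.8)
The plan is to exhibit an explicit monomial $\mathbf{X}^{\mathbf c}$ of degree $\sum_{i=1}^t a_i - t$ whose colon ideal with $I$ equals $\mathfrak m$, and then invoke the definition of the $v$-number. The natural candidate is
\[
\mathbf{X}^{\mathbf c} = x_1^{a_1-1}x_2^{a_2-1}\cdots x_t^{a_t-1},
\]
which has degree exactly $\sum_{i=1}^t a_i - t$. It then suffices to check that $(I:\mathbf{X}^{\mathbf c}) = \mathfrak m$, i.e. that this colon ideal is proper and contains every variable $x_i$.

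First I would verify that each $x_i \in (I:\mathbf{X}^{\mathbf c})$. This is immediate: $x_i \cdot \mathbf{X}^{\mathbf c} = x_1^{a_1-1}\cdots x_i^{a_i}\cdots x_t^{a_t-1}$ is divisible by $x_i^{a_i}$, and $x_i^{a_i} \in \mathscr G(I)$ by hypothesis, so the product lies in $I$. Hence $\mathfrak m \subseteq (I:\mathbf{X}^{\mathbf c})$. Next I would show the colon ideal is proper, equivalently that $\mathbf{X}^{\mathbf c} \notin I$. Using the colon formula for monomial ideals (Definition in the Preliminaries, from \cite[Proposition 1.2.2]{Hibi}), $\mathbf{X}^{\mathbf c} \in I$ would force some $u \in \mathscr G(I)$ to divide $\mathbf{X}^{\mathbf c}$; but every generator $u$ of an $\mathfrak m$-primary ideal is nonzero in every... more carefully, one argues that if $u = \mathbf{X}^{\mathbf d}$ divides $\mathbf{X}^{\mathbf c}$ then $d_i \le a_i - 1 < a_i$ for all $i$. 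I would need to rule this out; the cleanest route is to note that since $(I : \mathbf X^{\mathbf c}) \supseteq \mathfrak m$ and $\mathfrak m$ is maximal, either $(I:\mathbf X^{\mathbf c}) = \mathfrak m$ (done) or $(I:\mathbf X^{\mathbf c}) = S$, i.e. $\mathbf X^{\mathbf c} \in I$. So the whole proof reduces to showing $\mathbf{X}^{\mathbf c}\notin I$.

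The main obstacle, then, is exactly that last point: showing $x_1^{a_1-1}\cdots x_t^{a_t-1} \notin I$. This is \emph{not} automatic — it can genuinely fail, as a generator like $x_1^{a_1-1}x_2^{a_2-1}\in\mathscr G(I)$ would divide $\mathbf{X}^{\mathbf c}$. I expect the correct statement requires either an additional hypothesis (e.g. that the $x_i^{a_i}$ are the only generators, or that no generator divides $\mathbf{X}^{\mathbf c}$), or that one should instead take $\mathbf{X}^{\mathbf c}$ to be a minimal monomial with $\mathfrak m \subseteq (I:\mathbf X^{\mathbf c})$ sitting below $x_1^{a_1-1}\cdots x_t^{a_t-1}$ in the divisibility order. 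In the latter approach, one starts from $\mathbf X^{(a_1-1,\dots,a_t-1)}$, and if it lies in $I$ one passes to a proper divisor obtained by decreasing exponents, repeating until landing outside $I$; since decreasing exponents only enlarges the colon ideal, $\mathfrak m$ is preserved, and the resulting monomial has degree $\le \sum a_i - t$, giving $v(I) \le v_{\mathfrak m}(I) \le \sum_{i=1}^t a_i - t$. I would present the argument in this robust form, with the descent step as the technical heart, and double-check against Theorem \ref{4.1} that the bound is consistent (the matrix with rows $E(x_i^{a_i})$ lies in $\mathcal M$ precisely when $\mathbf X^{(a_1-1,\dots,a_t-1)}\notin E(I)$, matching the dichotomy above).
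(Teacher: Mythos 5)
Your overall strategy (exhibit a witness monomial of degree $\sum_i a_i - t$ with colon ideal $\mathfrak m$) is sound, and you correctly identify that the only issue is whether $\mathbf X^{\mathbf c}=x_1^{a_1-1}\cdots x_t^{a_t-1}$ lies in $I$. But your repair of that issue contains a genuine error: you claim that "decreasing exponents only enlarges the colon ideal, so $\mathfrak m$ is preserved." The monotonicity goes the other way. For monomials $g\mid h$ one has $(I:g)\subseteq(I:h)$ (if $fg\in I$ then $fh=fg\cdot(h/g)\in I$), so passing to a divisor \emph{shrinks} the colon ideal, and $\mathfrak m\subseteq(I:\mathbf X^{\mathbf c})$ gives no information about $(I:g)$ for a divisor $g$. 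Indeed the naive descent can fail: for $I=\langle x^3,y^3,xy\rangle$ one has $\mathbf X^{\mathbf c}=x^2y^2\in I$, and the descent path $x^2y^2\to xy^2\to xy\to x$ stops at $x\notin I$, yet $(I:x)=\langle x^2,y\rangle\neq\mathfrak m$. So "decrease exponents until you land outside $I$" does not produce a valid witness in general.

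The gap is easily closed, but it requires choosing the divisor more carefully: take $f$ to be a divisor of $\mathbf X^{\mathbf c}$ that is \emph{maximal with respect to divisibility} among the divisors of $\mathbf X^{\mathbf c}$ not lying in $I$ (this set is nonempty since $1\notin I$). Then for each $i$, either $x_if$ still divides $\mathbf X^{\mathbf c}$, in which case $x_if\in I$ by maximality, or the $x_i$-exponent of $f$ equals $a_i-1$, in which case $x_i^{a_i}\mid x_if$ and again $x_if\in I$. Hence $(I:f)=\mathfrak m$ and $\deg f\le\sum_i a_i-t$, which is exactly the bound. (Equivalently: every monomial in the socle of $S/I$ has each $x_i$-exponent at most $a_i-1$, and the socle is nonzero since $S/I$ is Artinian.) Note also that the paper takes a different route entirely: it deduces the bound from the matrix formula of Theorem \ref{4.1}, observing that minimality of $x_i^{a_i}$ in $\mathscr G(I)$ forces the diagonal entries of any $A\in\mathcal M$ to satisfy $a_{ii}\le a_i$. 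Your direct approach is arguably more elementary once the descent is fixed, but as written the key step is justified by a false inclusion.
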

\begin{proof}
By Theorem \ref{4.1}, $v(I)=\displaystyle\sum_{i=1}^{t}(a_{ii}-1)$, where $A=(a_{ij})$ is a corresponding matrix in $\mathcal{M}$ such that $\mathbf{X^{a_i}}\in\mathscr{G}(I)$, where $\mathbf{a_i}=(a_{i1},a_{i2},\ldots,a_{it})$ is the $i$-th row of $A$. 
Since $I$ is $\mathfrak{m}$-primary, therefore, $a_{ii}\leq a_i$ for all $i=1,2,\ldots,t$, which implies  $\sum_{i=1}^{t}a_{ii}\leq \sum_{i=1}^{t}a_i$. Thus, $v(I)\leq \displaystyle\sum_{i=1}^{t}a_i-t$.
\end{proof}
As an immediate Corollary, we give the necessary and sufficient conditions for the equality to hold in the above Proposition.
\begin{corollary}\label{equal}
Let $I\subset S$ be an $\mathfrak{m}$-primary monomial ideal with $x_{i}^{a_i}\in \mathscr{G}(I)$ for $i=1,\ldots,t$ and $a_i\in \mathbb{Z}_{> 0}$. Then $I=\langle{x}_{1}^{a_1},{x}_{2}^{a_2},\ldots,{x}_{t}^{a_t}\rangle$ if and only if $v(I)= \displaystyle\sum_{i=1}^{t}a_i-t.$
\end{corollary}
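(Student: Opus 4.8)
The plan is to prove both implications by leveraging Proposition \ref{f} together with the characterization of $v(I)$ from Theorem \ref{4.1}. The ``if'' direction of Proposition \ref{f} already gives $v(I)\le \sum_{i=1}^{t}a_i-t$ whenever $x_i^{a_i}\in\mathscr G(I)$, so the whole content of the corollary is pinning down exactly when equality occurs. For the reverse implication, suppose $I=\langle x_1^{a_1},\ldots,x_t^{a_t}\rangle$. First I would compute $v(I)$ directly: the only candidates for rows of a matrix $A\in\mathcal M$ are the exponent vectors $a_i e_i$, so the diagonal must be $(a_1,\ldots,a_t)$, and one checks condition (2) by noting that $(a_1-1,\ldots,a_t-1)\notin E(I)$ precisely because no $x_i^{a_i}$ divides $x_1^{a_1-1}\cdots x_t^{a_t-1}$. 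Hence $\mathcal M=\{A\}$ is a singleton and $v(I)=\operatorname{tr}(A)-t=\sum_{i=1}^t a_i-t$.

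For the ``only if'' direction, assume $v(I)=\sum_{i=1}^{t}a_i-t$ and suppose for contradiction that $I$ has some generator $g\in\mathscr G(I)$ other than the pure powers $x_1^{a_1},\ldots,x_t^{a_t}$. Since $I$ is $\m$-primary these pure powers are forced to lie in $\mathscr G(I)$ (they must appear as minimal generators once we know $x_i^{a_i}\in I$ with $a_i$ minimal). Take $g=\mathbf X^{\mathbf b}$ with at least two nonzero coordinates, say $b_j,b_k>0$ with $j\ne k$ and $b_j<a_j$. The idea is to build a matrix $A\in\mathcal M$ whose trace is strictly smaller than $\sum a_i$: replace the row $a_j e_j$ of the ``diagonal'' matrix by the exponent vector $\mathbf b$ of $g$, keeping the other rows $a_i e_i$ for $i\ne j$. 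Condition (1) holds because $a_{jj}=b_j<a_j=a_{jj}'$ is still the strict column maximum in column $j$ (all other rows have a $0$ there) and the diagonal entries in the other columns are unchanged and maximal. Condition (2) — that $(a_1-1,\ldots,b_j-1,\ldots,a_t-1)\notin E(I)$ — needs checking: no pure power $x_i^{a_i}$ divides this monomial since its $i$-th exponent is $a_i-1<a_i$, and no other generator divides it either, which I would justify using that $g$ itself has exponent vector $\mathbf b$ not dividing the candidate (its $j$-th coordinate drops by one) and a short case analysis on the remaining generators. This produces $A\in\mathcal M$ with $\operatorname{tr}(A)-t=\big(\sum_{i=1}^t a_i\big)-(a_j-b_j)-t<\sum_{i=1}^t a_i-t$, contradicting Theorem \ref{4.1}.

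The main obstacle I anticipate is verifying condition (2) for the modified matrix, i.e.\ ruling out that some \emph{other} minimal generator of $I$ divides the perturbed ``corner'' monomial $\mathbf X^{(a_1-1,\ldots,b_j-1,\ldots,a_t-1)}$. One must be careful that decreasing the $j$-th exponent really does destroy divisibility by every generator whose support is not contained in $\{j\}$; the cleanest route is probably to argue at the level of the colon ideal, showing that if condition (2) failed then $(I:\mathbf X^{\mathbf c})$ for the corresponding $\mathbf c$ would still be $\m$ (or would contain $\m$ hence equal it), which would hand us a corresponding monomial of $v(I)$ of even smaller degree — still a contradiction. A secondary subtlety is confirming that $x_1^{a_1},\dots,x_t^{a_t}$ are genuinely among the minimal generators of an arbitrary $\m$-primary $I$ with these exponents; this follows from minimality of each $a_i$ but should be stated explicitly so the hypothesis ``$x_i^{a_i}\in\mathscr G(I)$'' is used correctly.
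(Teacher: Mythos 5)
Your forward direction is correct and is essentially the paper's argument (for $I=\langle x_1^{a_1},\ldots,x_t^{a_t}\rangle$ the set $\mathcal M$ reduces to the single diagonal matrix). The converse, however, has a genuine gap at exactly the point you flag. Condition (2) for your perturbed matrix can really fail, and not merely for an unlucky choice of the column $j$: take $I=\langle x^3,y^3,z^3,xyz,y^2z^2\rangle\subset K[x,y,z]$ and the non-pure-power generator $g=y^2z^2$, so $\mathbf b=(0,2,2)$. The only admissible columns are $j=2,3$, and in both cases the perturbed corner monomial ($x^2yz^2$, resp.\ $x^2y^2z$) is divisible by the \emph{other} mixed generator $xyz$, so it lies in $I$ and your matrix is not in $\mathcal M$. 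Your proposed rescue for this case rests on a false statement: if condition (2) fails then $\mathbf X^{\mathbf c}\in I$, hence $(I:\mathbf X^{\mathbf c})=S$, not $\mathfrak m$; it is not an associated prime and does not hand you a corresponding monomial of smaller degree.

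The repair is short, is what the paper does, and makes the matrix construction unnecessary. Since every matrix in $\mathcal M$ has $a_{ii}\le a_i$ (as in the proof of Proposition \ref{f}), the value $\sum_{i=1}^t a_i-t$ can be attained only by the diagonal matrix, so $v(I)=\sum_{i=1}^t a_i-t$ forces the corresponding monomial of $v(I)$ to be $\mathbf X^{\mathbf a}$ with $\mathbf a=(a_1-1,\ldots,a_t-1)$; in particular $\mathbf X^{\mathbf a}\notin I$. But any minimal generator $\mathbf X^{\mathbf b}$ other than the pure powers satisfies $b_i<a_i$ for \emph{every} $i$ (otherwise some $x_i^{a_i}$ would divide it), hence $\mathbf X^{\mathbf b}\mid\mathbf X^{\mathbf a}$ and $\mathbf X^{\mathbf a}\in I$ --- a contradiction. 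Note that this observation also disposes of the ``failure case'' of your construction, since your $\mathbf X^{\mathbf c}$ divides $\mathbf X^{\mathbf a}$.
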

\begin{proof}
    By Theorem \ref{4.1}, for  $I=\langle{x}_{1}^{a_1},{x}_{2}^{a_2},\ldots,{x}_{t}^{a_t}\rangle$, we have $v(I)=\displaystyle\sum_{i=1}^{t}a_i-t$. For the converse, let $I$ be any monomial with $v(I)=\displaystyle\sum_{i=1}^{t}a_i-t$. Then corresponding monomial of $v(I)$ can be only of the form $\mathbf{X^a}=\mathbf{X}^{(a_1-1,a_2-1,\ldots,a_t-1)}$. Now, if possible, let $\mathbf{X^b}=\mathbf{X}^{(b_1,b_2,\ldots,b_t)}\in \mathscr G(I)$ for $0\leq b_i< a_i$ and $1\leq i\leq t$, 
    then $\mathbf{X^b}|\mathbf{X^a}$, which implies $\mathbf{X^a}\in I$. This contradicts the fact that $(I:\mathbf{X^a})=\mathfrak m$. Thus $I$ contains only pure power of variables. 
\end{proof}
In the next Proposition, we prove that the  $v$-number of $I$ is bounded above by the  $v$-number  of large powers of $I$.
\begin{proposition}
    Let $I\subset S$ be an $\mathfrak{m}$-primary monomial ideal. Then $v(I^{s+1})\geq v(I)$ for $s\gg 0$.
\end{proposition}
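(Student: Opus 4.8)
The plan is to deduce the statement from the general lower bound for $v$-numbers of monomial ideals recorded in Proposition \ref{e}. Since $I$ is $\mathfrak{m}$-primary, every power $I^{s+1}$ is again $\mathfrak{m}$-primary, so $\Ass(I^{s+1})=\{\mathfrak{m}\}$ for all $s\geq 0$, and therefore $v(I^{s+1})=v_{\mathfrak{m}}(I^{s+1})$. Because $I^{s+1}$ is again a monomial ideal, Proposition \ref{e} applies to it directly and gives
$$v(I^{s+1})=v_{\mathfrak{m}}(I^{s+1})\geq \alpha(I^{s+1})-1.$$

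The next step is to evaluate $\alpha(I^{s+1})$. For any nonzero graded ideal one has $\alpha(I^{n})=n\,\alpha(I)$: the inequality ``$\leq$'' comes from raising a minimal-degree element of $I$ to the $n$-th power, while ``$\geq$'' holds because every nonzero element of $I^{n}$ is a sum of products of $n$ elements of $I$, each of degree at least $\alpha(I)$. Taking $n=s+1$ yields $v(I^{s+1})\geq (s+1)\alpha(I)-1$. Since $\alpha(I)\geq 1$ (as $I$ is proper) and $v(I)$ is a fixed finite integer, the right-hand side grows without bound in $s$; in fact $(s+1)\alpha(I)-1\geq v(I)$ already when $s\geq v(I)$, so $v(I^{s+1})\geq v(I)$ for all $s\geq v(I)$, and in particular for $s\gg 0$.

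There is essentially no serious obstacle in this argument; the only points that need care are the identity $\alpha(I^{s+1})=(s+1)\alpha(I)$ and the remark that Proposition \ref{e} applies to the powers of $I$ because they remain monomial ideals and remain $\mathfrak{m}$-primary (so that $v=v_{\mathfrak{m}}$). It is worth noting that the proof actually furnishes an explicit threshold, namely $s_{0}=v(I)$, rather than merely ``$s\gg 0$''.
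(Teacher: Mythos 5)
Your proof is correct and follows essentially the same route as the paper: both arguments rest on the lower bound $v(I^{s+1})\geq (s+1)\alpha(I)-1$ obtained from Proposition \ref{e} applied to $I^{s+1}$, together with the observation that this linear function of $s$ eventually exceeds the constant $v(I)$. The only (cosmetic) difference is that the paper certifies the last step by routing through the upper bound $v(I)\leq \sum_{i=1}^{t}a_i-t$ of Proposition \ref{f}, whereas you simply invoke finiteness of $v(I)$, which in addition yields the explicit threshold $s\geq v(I)$.
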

\begin{proof}

    Since $I$ is an $\mathfrak m$-primary monomial ideal, therefore $x_i^{a_i}\in \mathscr{G}(I)$, for all $ i=1,\ldots,t$. By Proposition \ref{e}, we have $v(I^{s+1})\geq (s+1)\alpha(I)-1$. Now, there exists some  $s\gg0,$ such that $(s+1)\alpha(I)-1\geq \sum_{i=1}^{t}a_i-t$
    and by Proposition \ref{f}, we get $\sum_{i=1}^{t}a_i-t\geq v(I)$. Thus, $v(I^{s+1})\geq v(I)$ for $s\gg 0$.
\end{proof}
In  Proposition \ref{Saha}, authors have given an upper bound for the $v$-number of monomial ideals in terms of its colon ideal. In the next Proposition, we give the condition under which this upper bound is attained.
\begin{proposition}\label{4.6}

Let $I\subset S$ be a monomial ideal. Then $v(I^{n})=v(I^n:\mathbf{X}^\mathbf{G})+\deg \mathbf{X}^\mathbf{G}$ for $n\geq 1$, where $\mathbf{X}^\mathbf{G}$ is a proper divisor of monomial corresponding to $v(I^n)$.
\end{proposition}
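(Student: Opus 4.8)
The plan is to start from the general inequality in Proposition~\ref{Saha}, namely $v(I^n)\leq v(I^n:\mathbf{X}^\mathbf{G})+\deg\mathbf{X}^\mathbf{G}$, and to prove the reverse inequality $v(I^n)\geq v(I^n:\mathbf{X}^\mathbf{G})+\deg\mathbf{X}^\mathbf{G}$ under the hypothesis that $\mathbf{X}^\mathbf{G}$ is a \emph{proper} divisor of some monomial $f$ corresponding to $v(I^n)$. First I would fix such an $f$, so $\deg f=v(I^n)$ and $(I^n:f)=\mathcal{P}$ for some $\mathcal{P}\in\Ass(I^n)$, and write $f=\mathbf{X}^\mathbf{G}\cdot h$ for a monomial $h$ with $h\neq 1$ (properness) and $\deg h=v(I^n)-\deg\mathbf{X}^\mathbf{G}$.

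The key computation is the colon identity $(I^n:f)=(I^n:\mathbf{X}^\mathbf{G}h)=\bigl((I^n:\mathbf{X}^\mathbf{G}):h\bigr)$. Since $(I^n:f)=\mathcal{P}$ is prime, this exhibits $h$ as a witness showing $(J:h)=\mathcal{P}$ where $J:=(I^n:\mathbf{X}^\mathbf{G})$. I would then need to argue that $\mathcal{P}\in\Ass(S/J)$: this is immediate from $(J:h)=\mathcal{P}$ together with the fact that $\mathcal{P}$ is prime and $h\notin J$ (indeed $h\notin J$ because otherwise $f=\mathbf{X}^\mathbf{G}h\in I^n$, contradicting $(I^n:f)=\mathcal{P}\subsetneq S$). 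Consequently $v_{\mathcal{P}}(J)\leq\deg h$, and in particular $v(J)\leq\deg h=v(I^n)-\deg\mathbf{X}^\mathbf{G}$, i.e. $v(I^n)\geq v(I^n:\mathbf{X}^\mathbf{G})+\deg\mathbf{X}^\mathbf{G}$. Combined with Proposition~\ref{Saha} this gives equality.

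The main obstacle I anticipate is making sure the \emph{same} associated prime $\mathcal{P}$ that realizes $v(I^n)$ is the one used on the right-hand side, and checking that $\deg h$ genuinely bounds $v(I^n:\mathbf{X}^\mathbf{G})$ from above rather than just $v_{\mathcal{P}}(I^n:\mathbf{X}^\mathbf{G})$; this is where the direction of the inequality $v\leq v_{\mathcal{P}}$ must be used correctly, and where properness of the divisor $\mathbf{X}^\mathbf{G}$ (ensuring $h\neq 1$, hence $\deg h<v(I^n)$ and $h$ a legitimate witness of strictly smaller degree) is essential. I would also double-check the edge case $n\geq 1$ and that $I^n:f$ being a prime in $\Ass(I^n)$ forces $J\neq S$ so that $v(J)$ is well-defined. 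Once these bookkeeping points are settled, the argument is a short colon-ideal manipulation.
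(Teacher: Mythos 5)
Your proposal is correct and follows essentially the same route as the paper: write the witness monomial $f$ of $v(I^n)$ as $\mathbf{X}^\mathbf{G}\cdot h$, use the colon identity $(I^n:f)=\bigl((I^n:\mathbf{X}^\mathbf{G}):h\bigr)=\mathcal{P}$ to get $v(I^n:\mathbf{X}^\mathbf{G})\leq\deg h$, and combine with Proposition~\ref{Saha} for the reverse inequality. Your extra checks (that $h\notin(I^n:\mathbf{X}^\mathbf{G})$ and that $\mathcal{P}$ is genuinely an associated prime of the colon ideal) are details the paper leaves implicit, but the argument is the same.
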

\begin{proof}
Let $\mathbf{X}^{\mathbf{a}}$ be the monomial corresponding  to $v(I^n)$. Then $(I^n:\mathbf{X}^{\mathbf{a}})=\mathfrak{p}$, where $\mathfrak{p}\in \Ass(I)$. Now, if $\mathbf{X^G}$ is a divisor of $\mathbf{X^a}$, then $\mathbf{X}^{\mathbf{a}}=\mathbf{X}^\mathbf{G}\mathbf{X}^\mathbf{H}$ for some monomial $\mathbf{X^H}$. Thus we can write $((I^{n}:\mathbf{X^G}):\mathbf{X^H})=\mathfrak p$, which implies $v(I^n:\mathbf{X}^\mathbf{G})\leq \deg \mathbf{X}^\mathbf{H}$ and $\deg \mathbf{X}^\mathbf{H}= \deg \mathbf{X}^{\mathbf{a}} - \deg \mathbf{X}^\mathbf{G}$. Thus, $v(I^n:\mathbf{X}^\mathbf{G}) + \deg \mathbf{X}^\mathbf{G}\leq v(I^n)$ and by Proposition \ref{Saha}, we have $v(I^n)\leq v(I^n:\mathbf{X}^\mathbf{G}) + \deg \mathbf{X}^\mathbf{G}$. Therefore, $v(I^{n})=v(I^n:\mathbf{X}^\mathbf{G})+\deg \mathbf{X}^\mathbf{G}$.
\end{proof}
In the next Theorem, we show that the $v$-number of powers of a monomial ideal is bounded above by a linear polynomial.
\begin{theorem}\label{main}

    Let $I\subset S$ be a monomial ideal. Then there exists some positive integer $n_0$ such that $v(I^{n+1})\leq n\alpha(I) +d$ for all $n\geq n_0$, where $d$ is some positive integer.
\end{theorem}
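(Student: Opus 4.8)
The plan is to produce, for all large $n$, an explicit monomial $\mathbf{X}^{\mathbf a}$ and an associated prime $\mathfrak p$ of $I^{n+1}$ with $(I^{n+1}:\mathbf{X}^{\mathbf a})=\mathfrak p$ whose degree is controlled by $n\alpha(I)+d$. The natural candidate for $\mathfrak p$ comes from a minimal prime of $I$: pick a minimal prime $\mathfrak p = \langle x_i : i\in W\rangle$ of $I$ (equivalently a minimal vertex cover of the associated clutter in the squarefree case, but we do not need squarefreeness). Since $\Ass(I^{n+1})\supseteq \Min(I)$ for all $n\ge 1$, this $\mathfrak p$ is always available as a target. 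The key point is that $v_{\mathfrak p}(I^{n+1})$ grows at most linearly, and one can pin down the slope using a fixed generator of smallest degree.

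First I would fix a minimal generator $g=\mathbf X^{\mathbf b}$ of $I$ with $\deg g = \alpha(I)$, and let $\mathfrak p$ be a minimal prime of $I$. Write $\mathfrak p=\langle x_i : i\in W\rangle$. The idea is to use $g^{n}$ as the bulk of the colon witness: $g^n\notin I^{n+1}$ generically, and colonizing $I^{n+1}$ by $g^n$ should leave something close to $I$ (or at least an ideal with $\mathfrak p$ among its associated primes at bounded degree). More precisely, I would argue that there is a fixed monomial $h$ (depending only on $I$, not on $n$) of some degree $d'$ such that $(I^{n+1} : g^{n}h) = \mathfrak p$ for all $n\ge n_0$. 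Granting this, $v(I^{n+1})\le v_{\mathfrak p}(I^{n+1})\le \deg(g^n h) = n\alpha(I)+d'$, which is exactly the claimed bound with $d=d'$ (enlarging $d$ if necessary to keep it a positive integer). The role of $n_0$ is to ensure $g^n h\notin I^{n+1}$ and that the colon stabilizes.

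The main obstacle is establishing the stabilization claim $(I^{n+1}:g^nh)=\mathfrak p$ for large $n$. One clean way is to localize at $\mathfrak p$: after inverting all variables outside $\mathfrak p$, the localized ideal $I_{\mathfrak p}$ is $\mathfrak p S_{\mathfrak p}$-primary (since $\mathfrak p\in\Min I$), so one is reduced to an $\mathfrak m$-primary situation where Theorem~\ref{4.1}, together with the linear bound of Proposition~\ref{f} applied to powers, gives the needed linear control; then one transfers a witness monomial back to $S$ using that $\mathfrak p\in\Ass(I^{n+1})$ and clearing denominators by a fixed power of the variables outside $W$ (that fixed power is absorbed into $h$). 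Alternatively, and perhaps more elementarily, one can invoke that the sets $\Ass(I^{n+1})$ stabilize (Brodmann) and that for a fixed $\mathfrak p\in\Ass(I^{n+1})$ the function $n\mapsto v_{\mathfrak p}(I^{n+1})$ is subadditive up to a constant because $I^{n+1}\supseteq g\cdot I^{n}$, giving $v_{\mathfrak p}(I^{n+1})\le v_{\mathfrak p}(I^{n}) + \alpha(I)$ once $n$ is large; Fekete-type reasoning then yields $v_{\mathfrak p}(I^{n+1})\le n\alpha(I)+d$. I expect the write-up to follow the second route: prove the one-step inequality $v_{\mathfrak p}(I^{n+1})\le v_{\mathfrak p}(I^n)+\alpha(I)$ for $n\ge n_0$ (this is where the bulk of the monomial bookkeeping with Proposition~\ref{Saha} lives), then iterate down to the stabilization threshold $n_0$ and set $d:=v_{\mathfrak p}(I^{n_0})-n_0\alpha(I)$ adjusted to be a positive integer.
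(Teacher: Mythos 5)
Your overall shape --- colon $I^{n+1}$ by $g^n$ times a fixed monomial $h$ of bounded degree and read off the degree of the witness --- is exactly the paper's strategy. But both mechanisms you offer for the crucial claim that $(I^{n+1}:g^nh)=\mathfrak p$ for all large $n$ fail as stated. Your second route hinges on the one-step inequality $v_{\mathfrak p}(I^{n+1})\le v_{\mathfrak p}(I^n)+\alpha(I)$; this is not delivered by Proposition \ref{Saha} (which bounds $v$ of an ideal by $v$ of its colon, not $v$ of a power by $v$ of a lower power), and it is genuinely not automatic: if $(I^n:u)=\mathfrak p$, then $(I^{n+1}:ug)\supseteq\mathfrak p$, but the colon can strictly grow (even to all of $S$), so equality can fail. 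Indeed, the one-step inequality Ficarra--Sgroi actually prove has slope $\omega(I)$, the top generating degree, not $\alpha(I)$; obtaining slope $\alpha(I)$ by comparing consecutive powers is precisely the nontrivial point, so you cannot assume it. Your first route fares no better: after localizing at a minimal prime $\mathfrak p=\langle x_i: i\in W\rangle$ and applying Proposition \ref{f} to the $\mathfrak m$-primary ideal $(I_{\mathfrak p})^{n+1}$, the resulting linear bound has slope on the order of $\sum_{i\in W}a_i$ rather than $\alpha(I)$, and the denominator-clearing factor needed to transport a witness back to $S$ is not obviously of degree independent of $n$.

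The step you are missing is much simpler than either route and requires no choice of associated prime. Apply Proposition \ref{Saha} $n$ times with the fixed element $f=g$ of degree $\alpha(I)$ to obtain $v(I^{n+1})\le v(I^{n+1}:f^n)+n\alpha(I)$; this is legitimate because $f^k\notin I^{n+1}$ for $k\le n$ by comparing degrees with $\alpha(I^{n+1})=(n+1)\alpha(I)$. Then observe that since $f\in I$, the ideals $(I^{n+1}:f^n)$ form an ascending chain $I\subseteq (I^2:f)\subseteq(I^3:f^2)\subseteq\cdots$, which stabilizes by Noetherianity. Hence $(I^{n+1}:f^n)$ equals a fixed proper ideal $J$ for all $n\ge n_0$, and $d:=v(J)$ is the required constant. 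No comparison between $v$ of consecutive powers, no localization, and no control over which prime the final colon lands on is needed.
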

\begin{proof}
    Let $f$ be an element in $I$ such that $\alpha(I)=\deg f$. Then for any $n$, $f^{n}\notin I^{n+1}$. By Proposition \ref{Saha}, we have $v(I^{n+1})\leq v(I^{n+1}:f)+\alpha(I)$. Similarly, $v(I^{n+1}:f)\leq v(I^{n+1}:f^2)+2\alpha(I)$ and by proceeding in this manner, we get $v(I^{n+1})\leq v(I^{n+1}:f^n)+ n\alpha(I)$ for all $n\geq 1$. We consider the ascending chain of ideals $I\subseteq (I^2:f)\subseteq (I^3:f^2)\subseteq\ldots\subseteq(I^{n+1}:f^n)\subseteq (I^{n+2}:f^{n+1})\subseteq \ldots  $ in $S$ which stabilizes as $S$ is Noetherian. Thus, there exists some positive integer $n_0$ such that $(I^{n+1}:f^n) =(I^{n+2}:f^{n+1})$ for all $n\geq n_0$. Therefore, $v(I^{n+1})\leq n\alpha(I) +d$, where $d=v(I^{n+1}:f^n)$ for all $n\geq n_0$.
\end{proof}

The following example illustrates the above Theorem and also shows that the above-mentioned bound is better than the bound discussed in \cite[Proposition]{antonino}.
\begin{example}\label{cycle5}
    Let $I$ be the edge ideal of $C_5$, where $V(C_5)=\{a,b,c,d,e\}$ and $E(C_5)=\{ab,bc,cd,de,ea\}$. Then $v(I)=2$. Note that, $(I^{n+1}:(ab)^n)=I+\langle ec\rangle$ and $v(I^{n+1}:(ab)^n)=1$ for all $n\geq1$. Therefore, $v(I^{n+1})\leq v(I^{n+1}
:(ab)^n)+ \deg(ab)^n=1+2n$ for all $n\geq1$. Further by Proposition \ref{e}, we get $2(n+1)-1\leq v(I^{n+1})$ for all $n\geq1$. Thus, $v(I^{n+1})=2n+1$. In this example, $v(I^{n+1})$ becomes a linear polynomial for $n\geq 1$.
\end{example}
\begin{corollary}
   Let $I\subset S$ be a monomial ideal and $f\in I$ be such that $\deg f=\alpha(I)$ with $v(I^{n+1}:f^n)=\alpha(I)-1$ for large $n$. Then for $n\gg0$, $$v(I^{n+1})=(n+1)\alpha(I)-1. $$
\end{corollary}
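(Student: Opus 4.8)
The plan is to combine the linear upper bound from Theorem~\ref{main} with the linear lower bound from Proposition~\ref{e}, and observe that under the stated hypothesis the two bounds coincide. First I would recall that in the proof of Theorem~\ref{main} we established, for the chosen $f\in I$ with $\deg f=\alpha(I)$, the chain of inequalities
\[
v(I^{n+1})\leq v(I^{n+1}:f^{n})+n\alpha(I)\quad\text{for all }n\geq 1,
\]
coming from iterating Proposition~\ref{Saha}. By hypothesis, there is some $n_1$ with $v(I^{n+1}:f^{n})=\alpha(I)-1$ for all $n\geq n_1$, so substituting this in gives $v(I^{n+1})\leq (\alpha(I)-1)+n\alpha(I)=(n+1)\alpha(I)-1$ for all $n\geq n_1$.

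For the reverse inequality I would invoke Proposition~\ref{e} applied to the ideal $I^{n+1}$: for every $\mathcal P\in\Ass(I^{n+1})$ we have $v_{\mathcal P}(I^{n+1})\geq \alpha(I^{n+1})-1$, and since $v(I^{n+1})=\min_{\mathcal P}v_{\mathcal P}(I^{n+1})$ and $\alpha(I^{n+1})=(n+1)\alpha(I)$ for a (monomial, hence graded) ideal, this yields $v(I^{n+1})\geq (n+1)\alpha(I)-1$ for all $n\geq 1$. Putting the two bounds together, for all $n\geq n_1$ we obtain the equality $v(I^{n+1})=(n+1)\alpha(I)-1$, which is what was claimed with $n\gg 0$ meaning $n\geq n_1$.

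The argument is essentially a bookkeeping combination of two results already in the excerpt, so there is no serious obstacle; the only point requiring a word of care is the identity $\alpha(I^{n+1})=(n+1)\alpha(I)$, which holds because $\alpha$ is additive on products of graded ideals (a product of elements of minimal degrees achieves the bound, and no element of $I^{n+1}$ can have smaller degree). One should also note that the hypothesis "$v(I^{n+1}:f^{n})=\alpha(I)-1$ for large $n$" is automatically consistent with the stabilization of the chain $I\subseteq(I^2:f)\subseteq(I^3:f^2)\subseteq\cdots$ established in Theorem~\ref{main}, so the constant $d$ there is exactly $\alpha(I)-1$ in this situation; I would remark on this to tie the corollary cleanly to the preceding theorem.
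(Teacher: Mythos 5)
Your proposal is correct and follows essentially the same route as the paper: the upper bound comes from the inequality $v(I^{n+1})\leq v(I^{n+1}:f^n)+n\alpha(I)$ established in Theorem \ref{main} together with the hypothesis $v(I^{n+1}:f^n)=\alpha(I)-1$, and the lower bound comes from Proposition \ref{e} via $\alpha(I^{n+1})=(n+1)\alpha(I)$. Your additional remark justifying the identity $\alpha(I^{n+1})=(n+1)\alpha(I)$ is a point the paper leaves implicit, but it does not change the argument.
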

\begin{proof}
    By using Theorem \ref{main} and applying the given hypothesis, we have $v(I^{n+1})\leq n\alpha(I)+\alpha(I)-1=(n+1)\alpha(I)-1  $ for $n\gg 0$. From Proposition \ref{e}, we obtain $v(I^{n+1})\geq (n+1)\alpha(I)-1$. Therefore, $v(I^{n+1})=(n+1)\alpha(I)-1$ for $n\gg 0$.
\end{proof}
In the following Propositions, we show that the upper bound is attended for some class of $\mathfrak{m}$-primary monomial ideals for all $n\geq 1$.
\begin{proposition}\normalfont\label{4.8}

    Let $I\subset S$ be an $\mathfrak{m}$-primary monomial ideal and $v(I)=\alpha(I)-1$. Then $v(I^{n+1})= v(I)+ n\alpha(I)$ for all $n\geq 1$. 
\end{proposition}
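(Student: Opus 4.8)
The plan is to prove the two inequalities $v(I^{n+1})\ge v(I)+n\alpha(I)$ and $v(I^{n+1})\le v(I)+n\alpha(I)$ separately, using throughout the elementary identity $\alpha(I^{k})=k\,\alpha(I)$ for all $k\ge 1$ (the inequality $\le$ is witnessed by $g^{k}\in I^{k}$ for any $g\in I$ of degree $\alpha(I)$, while $\ge$ holds because a homogeneous element of $I^{k}$ is a sum of products of $k$ homogeneous elements of $I$, each of degree at least $\alpha(I)$). For the lower bound I would simply invoke Proposition~\ref{e}: every monomial $h$ with $(I^{n+1}:h)\in\Ass(I^{n+1})$ has $\deg h\ge \alpha(I^{n+1})-1=(n+1)\alpha(I)-1$, hence $v(I^{n+1})\ge (n+1)\alpha(I)-1$; since the hypothesis gives $v(I)=\alpha(I)-1$, the right-hand side is exactly $v(I)+n\alpha(I)$.

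For the upper bound I would exhibit an explicit monomial of degree $(n+1)\alpha(I)-1$ whose colon against $I^{n+1}$ is $\mathfrak{m}$. Because $I$ is $\mathfrak{m}$-primary we have $\Ass(I)=\{\mathfrak{m}\}$, so a monomial $f$ corresponding to $v(I)$ satisfies $\deg f=v(I)=\alpha(I)-1$ and $(I:f)=\mathfrak{m}$; in particular $x_if\in I$ for every $i$ while $f\notin I$. Choosing a monomial $g\in I$ with $\deg g=\alpha(I)$ and setting $h:=f\,g^{n}$, which has degree $(\alpha(I)-1)+n\alpha(I)=(n+1)\alpha(I)-1$, one gets $x_ih=(x_if)\,g^{n}\in I\cdot I^{n}\subseteq I^{n+1}$ for every $i$, so $\mathfrak{m}\subseteq (I^{n+1}:h)$. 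On the other hand $\deg h=(n+1)\alpha(I)-1<(n+1)\alpha(I)=\alpha(I^{n+1})$, so $h\notin I^{n+1}$ and $(I^{n+1}:h)\ne S$; since $\mathfrak{m}$ is maximal, this forces $(I^{n+1}:h)=\mathfrak{m}$. Hence $v(I^{n+1})\le\deg h=(n+1)\alpha(I)-1=v(I)+n\alpha(I)$, and combining with the lower bound yields equality. (This also shows the linear bound of Theorem~\ref{main} is attained for all $n\ge 1$ with $d=v(I)$.)

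The argument is short and I do not anticipate a real obstacle; the two points that must not be glossed over are the identity $\alpha(I^{n+1})=(n+1)\alpha(I)$, which is exactly what makes the degree count $\deg h<\alpha(I^{n+1})$ give $h\notin I^{n+1}$, and the observation that the $\mathfrak{m}$-primary hypothesis is what is doing the work twice over: it guarantees that the associated prime realizing $v(I)$ is $\mathfrak{m}$ (so that the factor $x_if\in I$ holds for all $i$), and it guarantees that the colon ideal $(I^{n+1}:h)$, which a priori is only known to contain the maximal ideal $\mathfrak{m}$, must in fact equal it.
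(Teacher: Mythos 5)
Your proposal is correct and follows essentially the same route as the paper: both take the monomial $f$ realizing $v(I)$ and a generator $g$ of degree $\alpha(I)$, show that $fg^{n}$ lies outside $I^{n+1}$ by the degree count $\deg(fg^{n})=(n+1)\alpha(I)-1<\alpha(I^{n+1})$, deduce $(I^{n+1}:fg^{n})=\mathfrak{m}$ from $x_if\in I$ and the maximality of $\mathfrak{m}$, and obtain the matching lower bound from Proposition~\ref{e}. No substantive difference.
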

\begin{proof}
    Let $\mathbf{X^a}$ be a monomial corresponding to $v(I)$ and $\mathbf{X^b}\in\mathscr{G}(I)$ such that $\alpha(I)=\deg \mathbf{X^b}$. Then $\deg \mathbf{X^a}=v(I)=\alpha(I)-1$. Clearly, $\mathbf{X}^{\mathbf{a}+n\mathbf{b}}\notin I^{n+1}$
    , otherwise if $\mathbf{X}^{\mathbf{a}+n\mathbf{b}}\in I^{n+1}$ then $\mathbf{X}^{\mathbf{a}+n\mathbf{b}}=\mathbf{X^{m_1}}\mathbf{X^{m_2}}\ldots\mathbf{X^{m_{n+1}}}$ where $\mathbf{X^{m_i}}\in\mathscr G(I)$ for all $1\leq i\leq n+1$. This implies $\deg \mathbf{X}^{\mathbf{a}+n\mathbf{b}}=\deg\mathbf{X^{m_1}}+\deg\mathbf{X^{m_2}}+\ldots+\deg\mathbf{X^{m_{n+1}}}\geq (n+1)\alpha(I)$. Thus $\alpha(I)-1+n\alpha(I)\geq (n+1)\alpha(I)$, which is a contradiction. Therefore,
    $(I^{n+1}:\mathbf{X}^{\mathbf{a}+n\mathbf{b}})\subset S$ for all $n\geq 1.$  Also, for all $n\geq 1,$ we have
    $(I^{n+1}:\mathbf{X}^{\mathbf{a}+n\mathbf{b}})=((I^{n+1}:\mathbf{X}^{n\mathbf{b}}):\mathbf{X^a})\supseteq(I:\mathbf{X^a})= \mathfrak{m}.$  Thus, $(I^{n+1}:\mathbf{X}^{\mathbf{a}+n\mathbf{b}})=\mathfrak{m}$ which implies $v(I^{n+1})\leq v(I)+n\alpha(I)$ for all $n\geq 1.$ By Proposition \ref{e}, we have $(n+1)\alpha(I)-1=v(I)+n\alpha(I)\leq v(I^{n+1})$ for all $n\geq 1.$ Therefore, $v(I^{n+1})=v(I)+n\alpha(I)$ for all $n\geq 1.$
\end{proof}
Next, we give a class of $\mathfrak m$-primary monomial ideals  for which $v(I)=\alpha(I)-1.$

\begin{example}

Let $I=\langle x^c,y^c\rangle +\langle x^{c-i}y^i~|~a\leq i\leq b\rangle\subset K[x,y]$ where $0<a< b<c$ be an $\mathfrak m$-primary monomial ideal. Clearly, $x^{c-a}y^a ~\mbox{and}~x^{c-(a+1)}y^{a+1}\in \mathscr{G}(I)$ and $\alpha(I)=c$. Then by Corollary \ref{two}, $v(I)\leq (c-a)+(a+1)-2=c-1$ which implies $v(I)=\alpha(I)-1$. Further, by Proposition \ref{4.8}, we have $v(I^{n+1})=v(I)+n\alpha(I)$ for all $n\geq1$. 
\end{example}
   
Now, we prove that for an $\mathfrak m$-primary monomial ideal which is generated by only pure powers of the variables, the upper bound is achieved. The following Lemma will be useful in proving the result.
\begin{lemma}
  \label{6.14}  

    Let $I=\langle {x}_{1}^{a_1},{x}_{2}^{a_2},\ldots,{x}_{t}^{a_t}\rangle$ be an $\mathfrak m$-primary ideal. Then for any $x_{i}^{a_i}\in \mathscr{G}(I)$ and for all $n \geq 1$
    \begin{center}
        $(I^{n+1}:x_{i}^{a_i})=I^{n}.$
    \end{center}
\end{lemma}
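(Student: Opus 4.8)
The plan is to describe $I^{n+1}$ explicitly as a monomial ideal and then apply the colon formula for monomial ideals recalled in the Definition following \cite[Proposition 1.2.2]{Hibi}. Since $I$ is generated by the pure powers $x_1^{a_1},\dots,x_t^{a_t}$, the ideal $I^{n+1}$ is generated by the monomials $\mathbf{X}^{\mathbf{c}}:=\prod_{j=1}^{t}x_j^{a_jc_j}$ as $\mathbf{c}=(c_1,\dots,c_t)$ ranges over $\mathbb{Z}_{\geq 0}^{t}$ with $c_1+\cdots+c_t=n+1$; one checks easily that $\mathbf{X}^{\mathbf{c}}$ divides $\mathbf{X}^{\mathbf{c}'}$ only when $\mathbf{c}\le\mathbf{c}'$ coordinatewise, so among vectors with the same coordinate-sum this forces $\mathbf{c}=\mathbf{c}'$ and hence this list is exactly $\mathscr{G}(I^{n+1})$ (although only the weaker statement that it generates $I^{n+1}$ will actually be needed).

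First I would record the easy inclusion $I^{n}\subseteq (I^{n+1}:x_i^{a_i})$: since $x_i^{a_i}\in I$ we get $x_i^{a_i}\cdot I^{n}\subseteq I\cdot I^{n}=I^{n+1}$. For the reverse inclusion I would invoke the formula $(I^{n+1}:x_i^{a_i})=\big\langle \mathbf{X}^{\mathbf{c}}/\gcd(\mathbf{X}^{\mathbf{c}},x_i^{a_i}) : c_1+\cdots+c_t=n+1\big\rangle$ and analyse a fixed generator. Here $\gcd(\mathbf{X}^{\mathbf{c}},x_i^{a_i})=x_i^{\min(a_ic_i,\,a_i)}$, so there are two cases. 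If $c_i\ge 1$, then $\gcd(\mathbf{X}^{\mathbf{c}},x_i^{a_i})=x_i^{a_i}$ and the quotient equals $x_i^{a_i(c_i-1)}\prod_{j\ne i}x_j^{a_jc_j}=\mathbf{X}^{\mathbf{c}-\mathbf{e}_i}$, a generator of $I^{n}$ because the coordinates of $\mathbf{c}-\mathbf{e}_i$ are nonnegative and sum to $n$. If $c_i=0$, then the gcd is $1$ and the quotient equals $\mathbf{X}^{\mathbf{c}}\in I^{n+1}\subseteq I^{n}$. In both cases the generator lies in $I^{n}$, so $(I^{n+1}:x_i^{a_i})\subseteq I^{n}$, and combining this with the first inclusion yields $(I^{n+1}:x_i^{a_i})=I^{n}$.

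I expect no real obstacle: the only thing requiring a bit of care is the elementary bookkeeping with the exponent vectors $\mathbf{c}$ and the justification that the displayed products generate $I^{n+1}$. I would also remark that the $\mathfrak{m}$-primary hypothesis is used only to guarantee $\mathscr{G}(I)=\{x_1^{a_1},\dots,x_t^{a_t}\}$; the colon identity itself holds for any ideal generated by pure powers of distinct variables, and this is precisely the form in which the lemma will be used to compute $v(I^{n+1})$ for such $I$.
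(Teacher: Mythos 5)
Your proof is correct and takes essentially the same route as the paper's: both compute $(I^{n+1}:x_i^{a_i})$ by applying the colon formula from \cite[Proposition 1.2.2]{Hibi} to the generators of $I^{n+1}$, which are products of $n+1$ pure powers, and observe that dividing such a generator by $\gcd(\,\cdot\,,x_i^{a_i})$ leaves an element of $I^{n}$ (together with the trivial inclusion $I^{n}\subseteq (I^{n+1}:x_i^{a_i})$). Your exponent-vector bookkeeping with $\mathbf{c}$ and the case split $c_i\geq 1$ versus $c_i=0$ is just a more explicit version of the paper's factorization $u=f_1\cdots f_{n+1}$ with $f_j\in\mathscr{G}(I)$.
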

\begin{proof}
     Let $g\in \mathscr{G}(I^{n+1}:x_{i}^{a_i})$. Then by the definition of colon ideal, we have $g=\frac{u}{gcd(u,x_{i}^{a_i})}$, where $u\in \mathscr{G}(I^{n+1})$. Note that $gcd(u,x_{i}^{a_i})=x_{i}^{a_i-v}$ for $0\leq v\leq a_{i}$. Since $u\in \mathscr{G}(I^{n+1})$, therefore $u=f_1f_2\ldots f_{n+1}$, where $f_j\in \mathscr{G}(I)$ for $1\leq j\leq n+1$. As $\mathscr{G}(I)$ contains only the pure powers of the variables,  there exists an $f_j\in \mathscr G(I)$ such that $x_{i}^{a_i-v}|f_j$. Thus, $g\in I^{n}$ which implies $(I^{n+1}:x_{i}^{a_i})\subseteq I^{n}$. Clearly, $I^{n}\subseteq (I^{n+1}:x_{i}^{a_i})$. Therefore $(I^{n+1}:x_{i}^{a_i})=I^{n}$ for any $x_{i}^{a_i}\in \mathscr{G}(I)$ and $n \geq 1$.
\end{proof}
\begin{proposition}\normalfont\label{4.10}

    Let $I=\langle {x}_{1}^{a_1},{x}_{2}^{a_2},\ldots,{x}_{t}^{a_t}\rangle$ be an $\mathfrak m$-primary monomial ideal. Then $v(I^{n+1})=v(I)+n\alpha(I)$ for all $n\geq 1$.
\end{proposition}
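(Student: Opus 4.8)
The plan is to combine the pure-power structure from Lemma~\ref{6.14} with the general linear bound of Theorem~\ref{main} and the lower bound of Proposition~\ref{e}, exactly as in the proof of Proposition~\ref{4.8}. First I would fix $i$ with $\alpha(I)=a_i$, i.e.\ choose a variable whose pure power realizes the minimal degree of a generator; write $f=x_i^{a_i}$, so $\deg f=\alpha(I)$. By Lemma~\ref{6.14} we have $(I^{n+1}:f)=I^{n}$ for every $n\geq 1$, and iterating this (or applying the lemma to $I^{n}$, etc.) gives $(I^{n+1}:f^{n})=I$ for all $n\geq 1$. This is the key structural input that makes the colon chain in Theorem~\ref{main} collapse to $I$ itself after exactly $n$ steps.

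Next I would feed this into the estimate from (the proof of) Theorem~\ref{main}: applying Proposition~\ref{Saha} $n$ times to $f$ yields $v(I^{n+1})\leq v(I^{n+1}:f^{n})+n\deg f = v(I)+n\alpha(I)$ for all $n\geq 1$; here the point is that the integer $d$ in Theorem~\ref{main} can be taken to be $v(I)$ and the threshold $n_0$ to be $1$, precisely because $(I^{n+1}:f^{n})=I$ on the nose rather than only eventually. For the reverse inequality, since $I$ is $\mathfrak m$-primary each $x_j^{a_j}\in\mathscr G(I)$ and in particular $x_i^{a_i}\in\mathscr G(I^{n+1})$ after suitable powering, but more directly Proposition~\ref{e} gives $v(I^{n+1})\geq v_{\mathcal P}(I^{n+1})\geq (n+1)\alpha(I)-1$ for every $\mathcal P\in\Ass(I^{n+1})$, hence $v(I^{n+1})\geq (n+1)\alpha(I)-1$.

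Finally I would reconcile the two bounds by invoking Corollary~\ref{equal}: for $I=\langle x_1^{a_1},\ldots,x_t^{a_t}\rangle$ we have $v(I)=\sum_{i=1}^{t}a_i-t$ from Theorem~\ref{4.1}, but one does \emph{not} want that value here; instead one uses that for pure-power ideals $\alpha(I)=\min_i a_i$ and one checks $v(I)=\alpha(I)-1$ directly --- a corresponding monomial of $v(I)$ is $x_j^{a_j-1}$ where $a_j=\alpha(I)$, since $(I:x_j^{a_j-1})$ contains every $x_k$ (for $k\neq j$ because $x_k^{a_k}\in I$, and for $k=j$ because $x_j\cdot x_j^{a_j-1}=x_j^{a_j}\in I$), so it equals $\mathfrak m$, giving $v(I)\leq a_j-1=\alpha(I)-1$; Proposition~\ref{e} forces equality. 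Then $v(I)+n\alpha(I)=(\alpha(I)-1)+n\alpha(I)=(n+1)\alpha(I)-1$, so the upper bound $v(I^{n+1})\leq v(I)+n\alpha(I)$ and the lower bound $v(I^{n+1})\geq (n+1)\alpha(I)-1$ coincide, yielding $v(I^{n+1})=v(I)+n\alpha(I)$ for all $n\geq 1$.

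\textbf{Main obstacle.} The only delicate point is the clean identity $(I^{n+1}:f^{n})=I$ for all $n\geq 1$ with no exceptional small cases; once Lemma~\ref{6.14} is in hand this is a short induction, but one must be careful that the lemma is applied with the \emph{same} variable $x_i$ at each stage and that $f^{n}\notin I^{n+1}$ (needed so that the colon is a proper ideal and Proposition~\ref{Saha} applies), which follows from a degree count: $\deg f^{n}=n\alpha(I)<(n+1)\alpha(I)\leq\alpha(I^{n+1})$. Everything else is bookkeeping with the already-established bounds.
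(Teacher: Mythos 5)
Your upper bound is exactly the paper's: take $f=x_i^{a_i}$ with $\deg f=\alpha(I)$, use Lemma~\ref{6.14} to get $(I^{n+1}:f^{n})=I$ on the nose, and feed this into the colon estimate of Theorem~\ref{main} to obtain $v(I^{n+1})\leq v(I)+n\alpha(I)$ for all $n\geq 1$. That half is fine.

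The lower bound, however, has a genuine error. You assert that for a pure-power ideal $v(I)=\alpha(I)-1$, with corresponding monomial $x_j^{a_j-1}$ where $a_j=\alpha(I)$. This is false in general: $x_k\in (I:x_j^{a_j-1})$ requires $x_kx_j^{a_j-1}\in I$, and the fact that $x_k^{a_k}\in I$ gives this only when $a_k=1$. Indeed, by your own citation of Corollary~\ref{equal}, $v(I)=\sum_{i=1}^{t}a_i-t$, which equals $\alpha(I)-1$ only when all but one exponent is $1$. Concretely, for $I=\langle x^2,y^2\rangle$ one has $v(I)=2$ and $\alpha(I)-1=1$ (note $(I:x)=\langle x,y^2\rangle\neq\mathfrak m$, while $(I:xy)=\mathfrak m$). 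Consequently the bound $v(I^{n+1})\geq (n+1)\alpha(I)-1$ from Proposition~\ref{e} does \emph{not} meet your upper bound: in the example it gives $2n+1$, whereas the claim is $v(I^{n+1})=2n+2$. You cannot "reconcile" the two bounds this way; noticing that your two statements about $v(I)$ contradict each other should have been the warning sign. The paper closes the gap differently: it takes a monomial $h$ realizing $v(I^{n+1})$, observes $h\in I^{n}\setminus I^{n+1}$ so that $h=h'f_1\cdots f_{n}$ with each $f_i\in\mathscr G(I)$ and $h'\notin I$, and then uses Lemma~\ref{6.14} to compute $\mathfrak m=(I^{n+1}:h)=((I^{n+1}:f_1\cdots f_n):h')=(I:h')$; hence $\deg h'\geq v(I)$ and each $\deg f_i\geq\alpha(I)$, giving $v(I^{n+1})=\deg h\geq v(I)+n\alpha(I)$. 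You would need to replace your final paragraph with an argument of this kind (or some other genuine lower bound tailored to pure-power ideals) for the proof to go through.
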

\begin{proof}
Let $f \in \mathscr{G}(I)$ be such that $\deg f=\alpha(I)$, then by Lemma \ref{6.14}, we get $(I^{n+1}:f^{n})=I$ for all $n\geq1$. Therefore by using Theorem \ref{main}, $v(I^{n+1})\leq v(I)+n\deg f=v(I)+n\alpha(I)$ for all $n\geq1$.
    Next, for a fixed $n\geq 1$, let $v(I^{n+1})= r,$ where $r=\deg h$ and $h$ is a monomial corresponding to $v(I^{n+1})$. Note that since $h\in I^n\backslash I^{n+1}$, then $h=h'f_1\ldots f_{n}$ where $f_i\in \mathscr G(I)$ for $1\leq i\leq n$ and $h'\notin I$. We have
    \begin{equation*} 
\begin{split}
\mathfrak m & = (I^{n+1}:h) \\
 & = ((I^{n+1}:f_1\ldots f_{n}):h')\\
 & = (I:h')~\mbox{(By Lemma \ref{6.14})}.
\end{split}
\end{equation*}

    This implies $v(I)\leq \deg h'$ and $\deg h'= \deg h -\displaystyle\sum_{i=1}^{n}\deg f_i$. Therefore, $v(I)+ \displaystyle\sum_{i=1}^{n}\deg f_i\leq \deg h$, which implies $v(I)+n\alpha(I)\leq v(I^{n+1})$. Thus, $v(I^{n+1})=v(I)+n\alpha(I)$ for all $n\geq 1$.
\end{proof}
\begin{remark}\normalfont
The above statement need not true be for any $\mathfrak m$-primary ideal. Let $I=\langle x^{10},y^{11},z^{12},xy^4z,xy^2z^3,x^3yz^5\rangle\subset K[x,y,z]$ be an $\mathfrak m$-primary ideal. Here $v(I)=14$ and $\alpha(I)=6$. Note that $v(I^2)=17\neq v(I)+\alpha(I)$.
\end{remark}

The next Proposition gives us a bound for the $v$-number of powers of edge ideal of a graph.

\begin{proposition}\label{f1}
    Let $I$ be the edge ideal of a graph $G$. Then $v(I^{n+1})\leq 2n+v(I)$ for all $n\geq 1$.
\end{proposition}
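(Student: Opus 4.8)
The plan is to imitate the argument of Theorem \ref{main} but track the constant explicitly for edge ideals. Let $G$ be a graph on vertex set $V$ and pick an edge $e=\{x_i,x_j\}\in E(G)$ realizing $\alpha(I(G))$, so $f=x_ix_j$ has $\deg f=2=\alpha(I)$. Since $f^n=(x_ix_j)^n\notin I^{n+1}$, Proposition \ref{Saha} gives $v(I^{n+1})\le v(I^{n+1}:f)+2$, and iterating (exactly as in the proof of Theorem \ref{main}) yields
\begin{equation*}
v(I^{n+1})\le v(I^{n+1}:f^n)+2n \quad\text{for all }n\ge 1.
\end{equation*}
So the whole statement reduces to the claim that $v(I^{n+1}:f^n)\le v(I)$ for every $n\ge 1$; combined with the displayed inequality this gives $v(I^{n+1})\le v(I)+2n$.

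To prove $v(I^{n+1}:f^n)\le v(I)$, first I would identify the colon ideal combinatorially. Writing $f=x_ix_j$ with $\{x_i,x_j\}\in E(G)$, I expect $(I^{n+1}:f^n)$ to be of the form $I(G)+J$ for some monomial ideal $J$ supported on the vertices near $e$; in the model example \ref{cycle5} one has $(I(C_5)^{n+1}:(ab)^n)=I+\langle ec\rangle$, independent of $n$. The key structural point is that $(I^{n+1}:f^n)$ stabilizes: because $f=x_ix_j$ is a product of two variables appearing in a generator of $I$, one checks that $(I^{n+1}:f^n)=(I^{2}:f)$ for all $n\ge 1$, since a monomial $u$ lies in $I^{n+1}:f^n$ iff $uf^n\in I^{n+1}$, and $f^n$ already "uses up" at most a bounded number of generators, so the excess factor $u$ needs only to complete the product into $I$. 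This is the step I expect to be the main obstacle: making precise that no matter how large $n$ is, a witness monomial $u$ of minimal degree for a prime $\mathcal P\in\Ass(I^{n+1}:f^n)$ can be taken with $\deg u\le v(I)$, by peeling off copies of $f$ (or of other degree-$2$ generators) from a witness of $v(I)$-type behaviour.

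Concretely, I would argue as follows. Let $\mathbf X^{\mathbf a}$ be a monomial with $(I:\mathbf X^{\mathbf a})=\mathcal P\in\Ass(I)$ and $\deg\mathbf X^{\mathbf a}=v(I)$. Then $\mathbf X^{\mathbf a}f^n\notin I^{n+1}$: if it were in $I^{n+1}$, it would factor as a product of $n+1$ generators, and after removing factors accounting for $f^n$ one would be forced to conclude $\mathbf X^{\mathbf a}\in I$ (using that $\deg\mathbf X^{\mathbf a}=v(I)<\alpha(I)+\text{(something)}$ is too small — this is where an argument parallel to the proof of Proposition \ref{4.8} is needed, comparing degrees $v(I)+2n$ versus $(n+1)\alpha(I)$ is too crude in general, so instead one uses that $f$ itself is a generator and any factorization of $\mathbf X^{\mathbf a}f^n$ into $n+1$ generators can be rearranged so that $n$ of the factors are $f$, leaving $\mathbf X^{\mathbf a}$ divisible by a generator). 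Granting $\mathbf X^{\mathbf a}f^n\notin I^{n+1}$, we get
\begin{equation*}
(I^{n+1}:\mathbf X^{\mathbf a}f^n)=\big((I^{n+1}:f^n):\mathbf X^{\mathbf a}\big)\supseteq (I:\mathbf X^{\mathbf a})=\mathcal P,
\end{equation*}
and since $\mathcal P$ is maximal among proper colons one concludes $(I^{n+1}:f^n:\mathbf X^{\mathbf a})=\mathcal P$, hence $v(I^{n+1}:f^n)\le\deg\mathbf X^{\mathbf a}=v(I)$. Plugging back into the iterated Proposition \ref{Saha} inequality completes the proof that $v(I^{n+1})\le 2n+v(I)$ for all $n\ge 1$.
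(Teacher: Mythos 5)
Your reduction, via iterating Proposition \ref{Saha} with a single edge $f=x_ix_j$, to the claim $v(I^{n+1}:f^n)\leq v(I)$ is fine, but the proof you sketch for that claim has two genuine gaps, and the second half of your argument fails on your own model example. First, the assertion that a $v(I)$-witness $\mathbf{X}^{\mathbf a}$ satisfies $\mathbf{X}^{\mathbf a}f^n\notin I^{n+1}$ is false in general: for $C_5$ with $f=ab$, the monomial $ce$ is a witness for $v(I)=2$ (indeed $(I:ce)=\langle a,b,d\rangle$), yet $ce\cdot ab=(bc)(ea)\in I^2$. Your proposed justification --- that any factorization of $\mathbf{X}^{\mathbf a}f^n$ into $n+1$ edges can be rearranged so that $n$ of the factors are $f$ --- is exactly what breaks here: products of edges refactor along even closed walks, and one cannot isolate the copies of $f$. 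Second, even when $\mathbf{X}^{\mathbf a}f^n\notin I^{n+1}$, the step ``$(I^{n+1}:f^n\mathbf{X}^{\mathbf a})\supseteq\mathcal P$ and proper, hence equal to $\mathcal P$'' is only valid when $\mathcal P=\mathfrak m$ (that is the situation of Proposition \ref{4.8}); for a non-maximal associated prime the colon can properly contain $\mathcal P$ without being the unit ideal, e.g.\ by containing monomials in the variables outside $\mathcal P$, in which case it is not prime and yields no bound on the $v$-number. Finally, the stabilization $(I^{n+1}:f^n)=(I^2:f)$ for a single edge is asserted but not proved, and the degree-counting heuristic you give is not an argument.

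The underlying obstruction is that coloning by a single edge genuinely enlarges the ideal: $(I(C_5)^2:ab)=I+\langle ce\rangle\neq I$, and there is no general reason that the $v$-number of this larger ideal is at most $v(I)$. The paper avoids this entirely by coloning with the \emph{non-monomial} degree-two form $f=\sum_{e_i\in E(G)}e_i$, for which the result of Mart\'inez-Bernal--Morey--Villarreal, $(I^{n+1}:I)=I^n$, gives $(I^{n+1}:f)=I^n$ and hence $(I^{n+1}:f^n)=I$ for all $n\geq 1$; Theorem \ref{main} then applies with $d=v(I)$ and $n_0=1$. To salvage your route you would need either to prove $v\bigl((I^{n+1}:e^n)\bigr)\leq v(I)$ for a single edge $e$ by a new argument, or to switch to the sum-of-edges element as the paper does.
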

\begin{proof}
    From \cite[Lemma 2.12]{susan}, we have for all $n \geq 1$, $(I^{n+1}:I)=I^n$ , which implies $\depth G(I)\geq 1$. Then $f=\sum_{e_i\in E(G)}e_i\in I\backslash I^2$ is a superficial element of $I$ such that  $(I^{n+1}:f)=I^n$ for all $n\geq 1$ and $\deg f=\alpha(I)$. Therefore, by using Theorem \ref{main} $v(I^{n+1})\leq v(I)+n\alpha(I)$ for all $n\geq 1$. Since $\alpha(I)=2$, $v(I^{n+1})\leq 2n+v(I)$ for all $n\geq 1$.
\end{proof}

In the next Corollary, we observe that if $v(I)=1$, then $v(I^{n+1})$ is a linear function for all $n\geq 1$.
\begin{corollary}\label{for}
    Let $I$ be the edge ideal of a graph $G$ with $v(I)=1$. Then $v(I^{n+1})=2n+1$ for all $n \geq 1$.  
\end{corollary}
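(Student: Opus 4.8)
The plan is to combine Proposition \ref{f1} with the known lower bound coming from $\alpha(I)$. First I would invoke Proposition \ref{f1}, which applies to the edge ideal $I$ of any graph $G$: it gives $v(I^{n+1}) \le 2n + v(I)$ for all $n \ge 1$. Plugging in the hypothesis $v(I) = 1$, this yields $v(I^{n+1}) \le 2n + 1$ for all $n \ge 1$.

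Next I would establish the reverse inequality. Since $I$ is an edge ideal of a graph, its generators are squarefree monomials of degree $2$, so $\alpha(I) = 2$, hence $\alpha(I^{n+1}) = 2(n+1)$. By Proposition \ref{e} applied to the ideal $I^{n+1}$, we have $v_{\mathcal P}(I^{n+1}) \ge \alpha(I^{n+1}) - 1 = 2(n+1) - 1 = 2n+1$ for every $\mathcal P \in \Ass(I^{n+1})$, and therefore $v(I^{n+1}) \ge 2n+1$ for all $n \ge 1$.

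Combining the two bounds gives $v(I^{n+1}) = 2n+1$ for all $n \ge 1$, as claimed. I do not anticipate any real obstacle here: the statement is essentially a one-line corollary obtained by squeezing $v(I^{n+1})$ between the linear upper bound of Proposition \ref{f1} (specialized to $v(I)=1$) and the linear lower bound of Proposition \ref{e} (applied to powers, using $\alpha(I)=2$). The only point worth stating carefully is that $\alpha(I^{n+1}) = (n+1)\alpha(I) = 2(n+1)$ for an edge ideal, which is immediate since every generator of $I^{n+1}$ is a product of $n+1$ degree-two generators of $I$.
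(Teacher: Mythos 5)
Your proposal is correct and follows exactly the paper's argument: the upper bound $v(I^{n+1})\le 2n+v(I)=2n+1$ from Proposition \ref{f1}, squeezed against the lower bound $v(I^{n+1})\ge \alpha(I^{n+1})-1=2(n+1)-1$ from Proposition \ref{e}. Your explicit remark that $\alpha(I^{n+1})=(n+1)\alpha(I)=2(n+1)$ for an edge ideal is a small but welcome clarification of a step the paper leaves implicit.
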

\begin{proof}
   By \cite[Proposition 4.3]{antonino}, we have $v(I^{n+1})\geq 2(n+1)-1=2n+1$ for all $n \geq 1$. Since $v(I)=1$, therefore by Proposition \ref{f1}, we get $v(I^{n+1})\leq 2n+1$ for all $n \geq 1$. Thus, $v(I^{n+1})=2n+1$ for all $n \geq 1$.
   \end{proof}
   \begin{remark}
Let $G$ be a graph such that its edge ideal $I(G)$ has linear resolution. Then from \cite[Corollary 4.6]{antonino}, we have $v(I(G))=1$. Therefore by using Corollary \ref{for}, $v(I(G)^{n+1})=2n+1$ for all $n\geq 1$, which gives an alternate proof of \cite[Theorem 4.4]{antonino}.
   \end{remark}
   Next we give one example of a graph whose edge ideal doesn't have linear resolution but $v(I^{n+1})=2n+1$ for $n\geq 1$.
   \begin{example}
 Let us  consider the path $P_5$, then $I=I(P_5)$ does not have a linear resolution. But $v(P_5)=1$. Hence by Corollary \ref{for}, we have $v(I^{n+1})=2n+1$ for all $n\geq 1$.
   \end{example}
In the next Theorem, we compare the $v$-number of an $\mathfrak m$-primary monomial ideal and its regularity.
\begin{theorem}
    Let $I\subset S$ be an $\mathfrak{m}$-primary monomial ideal. Then $v(I)\leq \reg(S/I)$.
\end{theorem}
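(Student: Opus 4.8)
The plan is to use the explicit formula for $v(I)$ from Theorem \ref{4.1} together with the local-cohomology description of regularity, in particular the fact that for an $\mathfrak m$-primary ideal $\reg(S/I) = a_0(S/I) = \max\{j : (S/I)_j \neq 0\}$, since $H^i_{\mathfrak m}(S/I) = 0$ for $i > 0$ (as $\dim S/I = 0$). Thus $\reg(S/I)$ equals the top nonzero degree of $S/I$, i.e. the largest degree $j$ such that some monomial of degree $j$ is \emph{not} in $I$. Concretely, if $\mathbf{X}^{\mathbf c}$ is a monomial of maximal degree not lying in $I$, then $\reg(S/I) = \deg \mathbf{X}^{\mathbf c} = \sum_i c_i$.

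First I would take a matrix $A = (a_{ij}) \in \mathcal M$ achieving the minimum in Theorem \ref{4.1}, so that $v(I) = \sum_{i=1}^t a_{ii} - t = \deg \mathbf{X}^{\mathbf a}$ where $\mathbf a = (a_{11}-1, \ldots, a_{tt}-1)$ and $(I : \mathbf{X}^{\mathbf a}) = \mathfrak m$. The key observation is that $\mathbf{X}^{\mathbf a} \notin I$: indeed condition (2) in the definition of $\mathcal M$ says precisely $(a_{11}-1,\ldots,a_{tt}-1) \notin E(I)$, which means no generator of $I$ divides $\mathbf{X}^{\mathbf a}$, hence $\mathbf{X}^{\mathbf a} \notin I$. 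Therefore $\mathbf{X}^{\mathbf a}$ is one of the monomials not in $I$, and since $\reg(S/I)$ is the \emph{maximum} degree of such a monomial, we immediately get
\[
v(I) = \deg \mathbf{X}^{\mathbf a} \leq \max\{\deg \mathbf{X}^{\mathbf c} : \mathbf{X}^{\mathbf c} \notin I\} = \reg(S/I).
\]

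The main point requiring care is the identity $\reg(S/I) = \max\{\deg \mathbf{X}^{\mathbf c} : \mathbf{X}^{\mathbf c}\notin I\}$ for an $\mathfrak m$-primary monomial ideal. I would justify it as follows: since $S/I$ is a finite-dimensional graded vector space concentrated in degrees $0, 1, \ldots, N$ for some $N$, and its $K$-basis in each degree is given by the monomials of that degree not in $I$, the top nonzero degree $N$ is exactly $\max\{\deg \mathbf{X}^{\mathbf c}: \mathbf{X}^{\mathbf c}\notin I\}$. On the other hand, $\dim S/I = 0$ forces $H^i_{\mathfrak m}(S/I) = 0$ for all $i \geq 1$, so in the formula $\reg(S/I) = \max\{a_i(S/I) + i : i \geq 0\}$ only the $i=0$ term survives, and $a_0(S/I) = \max\{j : H^0_{\mathfrak m}(S/I)_j \neq 0\}$; but $H^0_{\mathfrak m}(S/I) = S/I$ because $S/I$ is already $\mathfrak m$-torsion (every element is killed by a power of $\mathfrak m$ since $\dim S/I = 0$). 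Hence $a_0(S/I) = N$ and $\reg(S/I) = N$, completing the argument.

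I expect the inequality $\mathbf{X}^{\mathbf a}\notin I$ to be the only genuinely substantive step, and it is essentially immediate from reading condition (2) correctly; the rest is bookkeeping with the two standard descriptions of regularity. It may also be worth remarking, as the abstract promises, that the gap $\reg(S/I) - v(I)$ can be made arbitrarily large — e.g. by taking $I = \langle x_1^{a}, x_2, \ldots, x_t\rangle$ with $a$ large, where $v(I)$ stays bounded (here $v(I) = a-1$, but one can engineer examples with small $v$ and large $\reg$ using mixed generators) — though that is a separate example rather than part of this proof.
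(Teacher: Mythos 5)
Your proof is correct. You and the paper both rest on the same identity $\reg(S/I)=a_0(S/I)=\max\{j:(S/I)_j\neq 0\}=\max\{\deg \mathbf{X^c}:\mathbf{X^c}\notin I\}$ for the Artinian quotient $S/I$, but you run the comparison in the opposite direction. The paper takes a monomial $\mathbf{X^b}\notin I$ of \emph{maximal} degree and proves $(I:\mathbf{X^b})=\mathfrak m$ (maximality forces $x_j\mathbf{X^b}\in I$ for every $j$), thereby exhibiting an explicit witness of degree $\reg(S/I)$, so $v(I)\le\reg(S/I)$ follows from the definition of $v$. You instead take the monomial $\mathbf{X^a}$ realizing $v(I)$ via Theorem \ref{4.1}, observe that it lies outside $I$, and conclude that its degree is at most the top nonzero degree of $S/I$. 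Your direction is, if anything, the more economical one: the only fact you need about $\mathbf{X^a}$ is that $(I:\mathbf{X^a})=\mathfrak m\neq S$ forces $\mathbf{X^a}\notin I$, so you could bypass Theorem \ref{4.1} and condition (2) of $\mathcal M$ entirely and argue that any $f$ of degree $v(I)$ with $(I:f)\in\Ass(I)$ is nonzero in $S/I$, whence $(S/I)_{v(I)}\neq 0$. The paper's version has the minor advantage of producing a concrete monomial attaining the bound, which is in the spirit of its constructive Theorem \ref{4.1}. One small caveat on your closing aside: the example $I=\langle x_1^{a},x_2,\ldots,x_t\rangle$ has $v(I)=\reg(S/I)=a-1$, so it does not witness a large gap; the paper's final Proposition manufactures the gap using a mixed generator $x_1^{a_1-u}x_2^{a_2-(u+n)}$.
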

\begin{proof}
    Since $I$ is $\mathfrak m$-primary monomial ideal, we have $\dim(S/I)=0$. Therefore, $\reg(S/I)
  =a_0(S/I)  =\sup \{n\in\mathbb Z~|~(S/I)_n\neq 0\}$. Let $\mathbf{X^b}=x_{1}^{b_1}x_{2}^{b_2}\ldots x_{t}^{b_t}\in S$ be a monomial of highest degree such that $\overline{\mathbf{X^b}}\neq\overline{0}$ in $S/I$, i.e, $\deg \mathbf{X^b}=\displaystyle\sum_{i=1}^{t}b_i=\reg(S/I)$. Now $\mathbf{X^b}\notin I$, but $h_j=x_j\mathbf{X^b}\in I$ for $1\leq j\leq t$. Then there exists $g_j\in \mathscr G(I)$ such that $g_j|x_j\mathbf{X^b}$. Since $g_j\nmid \mathbf{X^b}$, $g_j=x_jY_j$ where $Y_j|\mathbf{X^b}$ for $1\leq j\leq t$. Therefore $(I:\mathbf{X^b})=\mathfrak m$ and it implies that $v(I)\leq \deg \mathbf{X^b}=\displaystyle\sum_{i=1}^{t}b_i$. This concludes that $v(I)\leq \reg(S/I)$.
\end{proof}
We conclude this section with the following Proposition, which shows that there exists an $\mathfrak{m}$-primary monomial ideal $I$ for which the 
difference between its regularity and $v$-number can be arbitrarily large. 
\begin{proposition}
Let $n$ be any positive integer. Then there exists an $\mathfrak{m}$-primary monomial ideal $I\subset S$ such that $\reg(S/I)-v(I)=n$.
\end{proposition}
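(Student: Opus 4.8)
The plan is to exhibit, for each positive integer $n$, an explicit Artinian monomial ideal and then read off both invariants. The organizing observation---already used in the theorem immediately above---is that for an $\mathfrak{m}$-primary ideal $\reg(S/I)=\max\{j : (S/I)_j\neq 0\}$, i.e. the top degree of a monomial surviving in $S/I$. So I want an ideal that contains a monomial of small degree whose colon is $\mathfrak{m}$ (forcing $v(I)$ to be small) while also admitting a monomial of degree precisely $n$ larger lying outside $I$ (forcing $\reg(S/I)$ to be large).

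Concretely, in $S=K[x_1,\dots,x_t]$ with $t\geq 2$ I would set $J=\langle x_1^2,\,x_1x_2,\,x_2^{\,n+2}\rangle\subseteq K[x_1,x_2]$ and take
\[
I \;=\; J \;+\; \langle\, x_i^2 : 3\leq i\leq t\,\rangle \;\subseteq\; S,
\]
the second summand being absent when $t=2$. Since $I$ contains a power of every variable it is $\mathfrak{m}$-primary, so $\Ass(S/I)=\{\mathfrak{m}\}$. First I would compute $v(J)=1$: the monomial $x_1$ has $(J:x_1)=\langle x_1,x_2\rangle$, the maximal ideal of $K[x_1,x_2]$, while $(J:1)=J\neq\langle x_1,x_2\rangle$; alternatively this is Corollary~\ref{two} applied with $(a_0,0),(a_1,b_1),(0,b_2)=(2,0),(1,1),(0,n+2)$, which gives $\min\{2+1-2,\;1+(n+2)-2\}=1$. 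As each $\langle x_i^2\rangle\subseteq K[x_i]$ has $v$-number $1$ and the blocks $K[x_1,x_2],K[x_3],\dots,K[x_t]$ are pairwise variable-disjoint, additivity of the $v$-number for variable-disjoint monomial ideals (\cite[Proposition 3.9]{Saha}) yields $v(I)=v(J)+(t-2)=t-1$.

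Next I would compute $\reg(S/I)$, again using that the regularity of an Artinian algebra is the largest degree of a monomial not in the ideal. A short check shows the monomials of $K[x_1,x_2]$ outside $J$ are exactly $1,x_1,x_2,x_2^2,\dots,x_2^{\,n+1}$, so $\reg(K[x_1,x_2]/J)=n+1$; similarly the monomials outside $\langle x_i^2:3\leq i\leq t\rangle$ are the squarefree monomials in $x_3,\dots,x_t$, of top degree $t-2$. Using the variable-disjoint factorization $S/I\cong (K[x_1,x_2]/J)\otimes_K (K[x_3,\dots,x_t]/\langle x_i^2:3\leq i\leq t\rangle)$, the top nonzero degree of the tensor product is the sum of the two top degrees, so $\reg(S/I)=(n+1)+(t-2)=n+t-1$. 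Combining the two computations, $\reg(S/I)-v(I)=(n+t-1)-(t-1)=n$, as required; for $t=2$ this is simply $I=J$ with $v(I)=1$ and $\reg(S/I)=n+1$.

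The only genuinely delicate point is the choice of the family: it must be arranged so that $v$ stays constant while $\reg$ grows linearly in $n$, after which every step is a routine Hilbert-function computation. I would also record that $t=1$ must be excluded, since in one variable every $\mathfrak{m}$-primary monomial ideal $\langle x_1^{a}\rangle$ satisfies $v(I)=\reg(S/I)=a-1$; the padding by the $x_i^2$ is precisely what makes the construction work uniformly for all $t\geq 2$. If one prefers to avoid quoting $v$-additivity and the regularity of a tensor product, the same values $v(I)=t-1$ and $\reg(S/I)=n+t-1$ follow directly from Theorem~\ref{4.1} (by exhibiting an explicit matrix in $\mathcal{M}$) and from a direct description of the socle of $S/I$; the disjoint-sum argument is just shorter.
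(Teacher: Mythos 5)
Your proof is correct and follows essentially the same route as the paper: the paper exhibits the family $I=\langle x_i^{a_i}\mid 1\leq i\leq t\rangle+\langle x_1^{a_1-u}x_2^{a_2-(u+n)}\rangle$ and your ideal is exactly the instance $a_1=2$, $a_2=n+2$, $a_i=2$ for $i\geq 3$, $u=1$. The only difference is bookkeeping --- you compute $v(I)$ via variable-disjoint additivity and $\reg(S/I)$ via the tensor decomposition, while the paper reads both off from Theorem \ref{4.1} and the top nonzero degree of $S/I$ --- and both computations are sound.
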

\begin{proof}

For any positive integer $n$, let $I=\langle {x}_{i}^{a_i}~|~1\leq i\leq t\rangle +\langle x_{1}^{a_1-u}x_{2}^{a_2-(u+n)}\rangle\subset S$ be an $\mathfrak m$-primary monomial ideal, where $a_1-u,a_2-(u+n)> 0$. Note that by Theorem \ref{4.1}, we get $v(I)=\displaystyle\sum_{i=1}^{t}a_{i}-(u+n+t)$ and $\reg(S/I)=\sup \{i\in\mathbb Z~|~(S/I)_i\neq 0\}=\displaystyle\sum_{i=1}^{t}a_{i}-(u+t)$. Thus, $\reg (S/I)-v(I)=n$.
\end{proof}
\section{Acknowledgements}
We would like to thank Dr. Kamalesh Saha for his valuable suggestions.

\end{document}